\newcommand{\SSSText}[1]{{\mathup{\mathsmaller{#1}}}}
\newcommand{\Real}{\mathbb{R}}
\newcommand{\Complex}{\mathbb{C}}
\newcommand{\Integer}{\mathbb{Z}}
\DeclareMathOperator{\Div}{div}
\DeclarePairedDelimiter{\RoundBrackets}{(}{)}
\DeclarePairedDelimiter{\CurlyBrackets}{\{}{\}}
\DeclarePairedDelimiter{\SquareBrackets}{[}{]}
\newcommand{\mathdefault}[1][]{}
\newtcolorbox{justabox}[2][]{%
  enhanced,
  attach boxed title to top center={yshift=-3mm,yshifttext=-1mm},
  % colback=blue!5!white,
  colframe=blue!75!black,
  colbacktitle=red!80!black,
  fonttitle=\bfseries,
  title=#2,#1
}
\newtheorem{theorem}{Theorem}[section]
\newtheorem{lemma}[theorem]{Lemma}
\newtheorem{proposition}[theorem]{Proposition}
\newtheorem{corollary}{Corollary}[theorem]
\theoremstyle{definition}
\newtheorem{assumption}{Assumption}
\crefname{assumption}{assumption}{assumptions}
\Crefname{assumption}{Assumption}{Assumptions}
\crefname{problem}{problem}{problems}
\Crefname{problem}{Problem}{Problems}
\crefname{equation}{}{}
\Crefname{equation}{}{}
\theoremstyle{remark}
\title{Homogenization rates of beam lattices to micropolar continua}
\author[1]{Eric T. Chung}
\author[1]{Kuang Huang}
\author[1]{Changqing Ye\thanks{\href{mailto:cqye@math.cuhk.edu.hk}{cqye@math.cuhk.edu.hk}}}
\affil[1]{Department of Mathematics, The Chinese University of Hong Kong, Shatin, Hong~Kong~SAR, China.}
\begin{document}
\maketitle
\begin{abstract}
As the size of a mechanical lattice with beam-modeled edges approaches zero, it undergoes homogenization into a continuum model, which exhibits unusual mechanical properties that deviate from classical Cauchy elasticity, named micropolar elasticity.
Typically, the homogenization process is qualitative in the engineering community, lacking quantitative homogenization error estimates.
In this paper, we rigorously analyze the homogenization process of a beam lattice to a continuum.
Our approach is initiated from an engineered mechanical problem defined on a triangular lattice with periodic boundary conditions.
By applying Fourier transformations, we reduce the problem to a series of equations in the frequency domain.
As the lattice size approaches zero, this yields a homogenized model in the form of a partial differential equation with periodic boundary conditions.
This process can be easily justified if the external conditions in the frequency domain are nonzero only at low-frequency modes.
However, through numerical experiments, we discover that beyond the low-frequency regime, the homogenization of the beam lattice differs from classical periodic homogenization theory due to the additional rotational degrees of freedom in the beams.
A crucial technique in our analysis is the decoupling of displacement and rotation fields, achieved through a linear algebraic manipulation known as the Schur complement.
Through dedicated analysis, we establish the coercivity of the Schur complements in both lattice and continuum models, which enables us to derive convergence rate estimates for homogenization errors.
Numerical experiments validate the optimality of the homogenization rate estimates.
% Moreover, the proposed framework is fully generalizable to rectangular and honeycomb lattices.

  \textbf{Keywords:} beam lattice, quantitative homogenization, micropolar elasticity, metamaterials
\end{abstract}

\section{Introduction}
% Metamaterial
Over the past few decades, the development of advanced fine additive manufacturing techniques has revolutionized the design and fabrication of metamaterials---engineered materials that exhibit properties not naturally found in conventional materials.
A significant number of mechanical metamaterials are characterized by lattice structures, where sub-dimensional components, i.e., edges, are commonly modeled as beams, rods, or thin walls \cite{Somnic2022,Jiao2023}.
By precisely manipulating the geometry and arrangement of these lattice structures, mechanical metamaterials can achieve an exceptional balance of high stiffness and low weight, which is especially advantageous in fields such as aerospace, civil engineering, and biomedical engineering \cite{Bici2018,Berger2017,Ha2020}.
In our work, we specifically focus on beam lattice metamaterials, where all edges function as mechanical beams, and all nodes serve as beam junctions \cite{Fleck2010,Meza2017}.

% Homogenization of beam lattice
% Metamaterials and homogenization are intrinsically linked, as the periodic structures commonly found in metamaterials are well-suited for analysis through homogenization techniques.
% When applying homogenization to a beam lattice metamaterial, it is typically assumed that the size of the lattice cells is much smaller than the characteristic length of the system.
% In this limiting case, a continuum model can be derived from the process.
% However, the resulting governing mechanical law is not the conventional Cauchy continuum but rather a kinematically enriched type, such as the Cosserat or micromorphic continuum, or strain gradient media \cite{Kumar2004,AbdoulAnziz2018}.
% This outcome is natural, as the beam lattice model inherently accounts for the effects of rotations at the nodes, which are absent in the Cauchy continuum.

% To quantitative homogenization
A standard approach to obtaining the homogenized model of a beam lattice involves first constructing a Representative Volume Element (RVE), then calculating the potential energy on the RVE, and finally determining the unknown parameters in the prescribed continuous model, commonly micropolar elasticity, by equating the potential energy density on the RVE \cite{Kumar2004,Hasanyan2016,Alavi2022}.
While this method is effective, it lacks quantitative insights into how the discrete model deviates from the homogenized model.
The concept of quantitative homogenization, introduced by the PDE analysis community, aims to measure the error between multiscale models and their homogenized counterparts.
This theory typically involves the following settings: (1) the coefficient in the multiscale PDE exhibits small-periodic oscillations; (2) both the multiscale and homogenized solutions are derived from PDEs with identical boundary conditions; and (3) the difference between the two solutions is quantified under suitable regularity assumptions on the provided data \cite{Jikov1994,Shen2018,Ye2021,Ye2023b}.
We will adopt the general methodology of quantitative homogenization and tailor it to the specific context of beam lattice models.

% On mathematical convergence results.
Quantitative homogenization, in essence, provides an estimate of the convergence rates of a lattice model to its continuum counterpart.
Therefore, understanding general convergence results is a prerequisite for achieving a quantitative outcome.
The theory of $\Gamma$-convergence, which examines the convergence of energy functionals, serves as a suitable mathematical framework for this purpose \cite{DalMaso1993,Braides2002}.
A significant contribution to this field was established by Abdoul-Anziz and Seppecher \cite{AbdoulAnziz2018a,AbdoulAnziz2018} who offered a comprehensive picture of how a high-contrast elastic medium first converges to a sub-dimensional model (i.e., a lattice) and subsequently to a continuum model incorporating second-order gradient terms.
Additionally, they addressed a common misconception---gradient effects arise from flexural interactions, emphasizing the importance of rigorous mathematical analysis over mechanical presumptions in studying the homogenization process.
Meanwhile, we also note the recent work by Griso, Khilkova, and Orlik et al.\ which highlights the intrinsic mathematical complexity of mechanical lattice homogenization \cite{Griso2021}.
Depending on the homogenization process and parameter scaling, the limiting model can be strikingly different, e.g., the high-order gradient terms emphasized in Ref.\ \cite{AbdoulAnziz2018a} diminishes in the setting described in Ref.\ \cite{Griso2021}.

% % High-order homogenization
% There is a surging interest in high-order homogenization on beam lattice models, as evidenced by the recent publications \cite{Thbaut2024,Ye2024a}, which extends the classical homogenization theory to incorporate scale-dependent terms in the homogenized model.
% The technique is akin to the asymptotic expansions in PDE homogenization \cite{Bensoussan2011}, where a power series is formally constructed regarding the scale parameter, and the coefficients of the leading-order terms are matched.
% There is a ceiling of accuracy if the homogenized model is independent of the scale parameter, which will be proved in our analysis.
% Consequently, to achieve a higher accuracy, it is inevitable to consider high-order homogenization.
% However, high-order homogenization is also notorious for imposing suitable boundary conditions on the higher-order terms and dealing with the non-coercive nature of the energy functional \cite{Audoly2023}.

% Our contributions
The thrust of our work is mimicking the quantitative homogenization framework in PDEs.
The first challenge is deriving the homogenized model of a beam lattice.
Unlike conventional methods that rely on Taylor expansions of macro displacements and rotations at the nodes, We devise a mechanical problem on the lattice where each node is subjected to external forces and torques.
To exclude the influence of boundary conditions, we exclusively consider Periodic Boundary Conditions (PBC), and the equilibrium solution to this mechanical problem can be expressed in the Fourier domain through discrete Fourier transformations, with the corresponding equation for each Fourier mode containing the scale parameter.
Under a suitable scaling process and taking a limit, we derive the equation corresponding to the Fourier coefficient, which determines the homogenized model in the context of Fourier expansions.
Therefore, the homogenized model, represented as a PDE defined on the continuum domain with PBCs, is rigorously established.
The mathematical validity of this procedure is straightforward to verify, assuming that the Fourier modes of external forces and torques are non-zero only at low frequencies---a concept we refer to as the \emph{low-frequency principle}.
The difference between the two solutions is hence the error induced by the homogenization process.
However, on the triangular lattice, we numerically observe that the error cannot diminish to zero as the scale parameter approaches zero when only the $L^2$-type right-hand terms---the external forces and torques---are provided.
This observation reveals a significant distinction between beam lattice homogenization and classical periodic homogenization \cite{Shen2018}.

Given the differing physical interpretations of displacements and rotations, our quantitative analysis begins by decoupling these two types of degrees of freedom, which naturally leads to the rediscovery of the Schur complement corresponding to the displacement component.
Through a detailed analysis, we prove that the Schur complements in the lattice and continuum models are both coercive, akin to the Laplace operator---a property that can also be interpreted as the Korn inequality \cite{Duvaut1976}.
Leveraging this coercivity property, we establish a quantitative estimate for the homogenization error, where the convergence rates regarding the scale parameter are explicitly determined.
Our analysis implies that achieving the same level of accuracy for both displacement and rotation fields requires the regularity of external torques to be higher than that of external forces.
% This finding is consistent with the intuition that rotations are more sensitive to external perturbations than displacements.
Furthermore, we extend our analysis to the square lattice.

% Previous works
We are aware of pioneering works of Martinsson and Babu\v{s}ka in \cite{Martinsson2007,Martinsson2007a} on the homogenization of mechanical truss/frame models.
Our work distinguishes itself by addressing the rotation field within the lattice, which significantly complicates the mathematical analysis.
% The theoretical results may also serve in understanding recently developed numerical homogenization methods for spatial network models \cite{Goertz2023,Hauck2024,Kettil2019}.

% The structure of the paper
The remainder of this paper is organized as follows.
In \cref{sec: preliminaries}, we introduce the foundational concepts of Euler--Bernoulli beam theory and Fourier transforms, which are essential for the subsequent discussions.
\Cref{sec: homogenization} presents a rational framework for understanding the homogenization process of beam lattices. Here, we focus on the low-frequency principle to derive the homogenized model and discuss numerical observations to motivate the quantitative analysis.
\Cref{sec: quantitative} is the core of the paper, where we rigorously develop the mathematical tools needed for a precise estimate of the homogenization error and the optimality of the theory is validated through numerical experiments.
Extensions to other lattice structures are discussed in \cref{sec: extensions}.
Finally, we summarize our conclusions and outlooks in \cref{sec: conclusion}.

\section{Preliminaries}\label{sec: preliminaries}
In this section, we introduce some basic concepts, notations and mathematical tools that will be used throughout the paper.

\subsection{Euler--Bernoulli beam theory}
A lattice solely is an abstraction of geometric relations of nodes and edges.
We still need to apply physical laws to pin down the specific mechanical problem that will be investigated.
In this work, every edge of the lattice is modeled as an Euler--Bernoulli beam, and the degrees of freedom of the whole system are the displacements and rotations of the nodes.
The extension to other beam models, such as Timoshenko beams, is not discussed here but can be considered in future work.
However, we emphasize that interesting phenomena emerge from the introduction of rotations, which are not presented in an oversimplified model, like the spring or truss lattice model.

We briefly review the Euler--Bernoulli beam theory.
Consider a beam with a length of $L$ that undergoes deformation solely due to boundary conditions and is not subjected to any external loading.
The governing equations are
\begin{alignat}{2}
  \frac{\di}{\di s} \RoundBrackets*{\mathtt{S} \frac{\di \bar{u}_x}{\di s}} = 0,         & \quad \forall s \in (0, L)\quad &  & \text{(axial displacement)}, \label{eq: axial}           \\
  \frac{\di^2}{\di s^2} \RoundBrackets*{\mathtt{H} \frac{\di^2 \bar{u}_y}{\di s^2}} = 0, & \quad \forall s \in (0, L)\quad &  & \text{(transverse displacement)}, \label{eq: transverse}
\end{alignat}
where $\mathtt{S}$ and $\mathtt{H}$ are the axial and centroidal bending stiffness, respectively (see Ref.\ \cite{Bauchau2009}).
Boundary conditions are applied at two ending nodes as
\[
  \left\{
  \begin{alignedat}{1}
    \bar{u}_x(0)   & = v_{x}^{\vdash},                                                         \\
    \bar{u}_y(0)   & = v_{y}^{\vdash},                                                         \\
    -\bar{u}'_y(0) & = -\theta^{\vdash} (\text{\small ``-'' is due to the outward direction}), \\
  \end{alignedat}
  \right.
  \text{ and }
  \left\{
  \begin{alignedat}{1}
    \bar{u}_x(L)  & = v_{x}^{\dashv},  \\
    \bar{u}_y(L)  & = v_{y}^{\dashv},  \\
    \bar{u}'_y(L) & = \theta^{\dashv}.
  \end{alignedat}
  \right.
\]
From \cref{eq: axial}, we can see that the axial displacement $\bar{u}_x$ is a Hermite interpolation of the boundary conditions as
\[
  \bar{u}_x(s) = v_{x}^{\vdash}\RoundBrackets*{1-t}+v_{x}^{\dashv}t \ \text{with } t \coloneqq \frac{s}{L}.
\]
Similarly, from \cref{eq: transverse}, the transverse displacement $\bar{u}_y$ can be determined by the cubic Hermite interpolation as
\[
  \bar{u}_y(s) = (2t^3-3t^2+1)v_{y}^{\vdash}+(t^3-2t^2+t)L\theta^{\vdash}+(-2t^3+3t^2)v_y^{\dashv}+(t^3-t^2)L\theta^{\dashv}
\]
with $t = s/L$ again.
\Cref{fig: EB-beam} serves as a visual illustration of the deformation of an Euler--Bernoulli beam under the imposed boundary conditions.

\begin{figure}[!ht]
  \centering
  \input{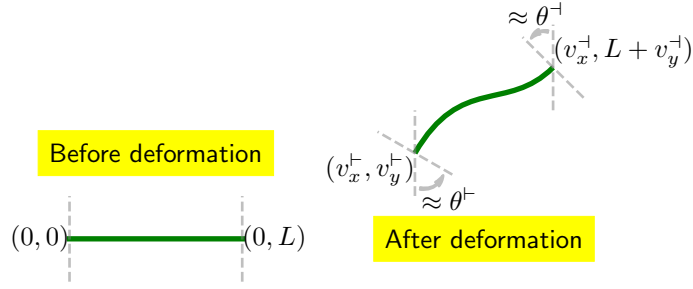}
  \caption{An Euler--Bernoulli beam. The deformation of the beam is driven by boundary conditions, i.e., the displacements $(v_{x}^{\vdash}, v_{y}^{\vdash})$ and $(v_{x}^{\dashv}, v_{y}^{\dashv})$, and rotations $\theta^{\vdash}$ and $\theta^{\dashv}$ at two ending nodes.}\label{fig: EB-beam}
\end{figure}

The potential energy of the beam is given by
\begin{equation}\label{eq: potential}
  \begin{split}
     & \quad \frac{1}{2}\int_{0}^{L} \mathtt{S}\RoundBrackets*{\bar{u}_x'}^2 + \mathtt{H} \RoundBrackets*{\bar{u}_y''}^2 \di s = \frac{\mathtt{S}}{2L}\RoundBrackets*{v_x^{\vdash} - v_x^{\dashv}}^2 + \frac{\mathtt{H}}{2L^3} \int_{0}^{1} \RoundBrackets*{\frac{\di^2 \bar{u}_y}{\di t^2}}^2 \di t \\
     & =\frac{\mathtt{S}}{2L}\begin{bmatrix}
                               v_x^{\vdash} & v_x^{\dashv}
                             \end{bmatrix} \cdot \mathscr{K}_{\tau}\begin{bmatrix}
                                                                     v_x^{\vdash} \\
                                                                     v_x^{\dashv}
                                                                   \end{bmatrix}+\frac{\mathtt{H}}{2L^3}\begin{bmatrix}
                                                                                                          v_y^{\vdash} & \theta^{\vdash} & v_y^{\dashv} & \theta^{\dashv}
                                                                                                        \end{bmatrix} \cdot \mathscr{K}_{\nu}\begin{bmatrix}
                                                                                                                                               v_y^{\vdash}    \\
                                                                                                                                               \theta^{\vdash} \\
                                                                                                                                               v_y^{\dashv}    \\
                                                                                                                                               \theta^{\dashv}
                                                                                                                                             \end{bmatrix},
  \end{split}
\end{equation}
where
\[
  \mathscr{K}_{\tau} \coloneqq \begin{bmatrix}
    1  & -1 \\
    -1 & 1
  \end{bmatrix}
  \text{ and }
  \mathscr{K}_{\nu} \coloneqq \begin{bmatrix}
    12  & 6L   & -12 & 6L   \\
    6L  & 4L^2 & -6L & 2L^2 \\
    -12 & -6L  & 12  & -6L  \\
    6L  & 2L^2 & -6L & 4L^2
  \end{bmatrix}
\]
are two positive semi-definite matrices.
% Taking decomposition of the symmetric matrix $\mathscr{K}_{\nu}$, we have
% \[
%   \mathscr{K}_{\nu}=
%   \begin{bmatrix}
%     \frac{\sqrt{10}}{5}  & 0                   & -\frac{\sqrt{3}}{3} & \frac{\sqrt{2}}{2} \\
%     \frac{\sqrt{10}}{10} & -\frac{\sqrt{2}}{2} & \frac{\sqrt{3}}{3}  & 0                  \\
%     -\frac{\sqrt{10}}{5} & 0                   & 0                   & \frac{\sqrt{2}}{2} \\
%     \frac{\sqrt{10}}{10} & \frac{\sqrt{2}}{2}  & \frac{\sqrt{3}}{3}  & 0
%   \end{bmatrix}
%   \begin{bmatrix}
%     30 & ~ & ~ & ~ \\
%     ~  & 2 & ~ & ~ \\
%     ~  & ~ & 0 & ~ \\
%     ~  & ~ & ~ & 0
%   \end{bmatrix}
%   \begin{bmatrix}
%     \frac{\sqrt{10}}{5} & \frac{\sqrt{10}}{10} & -\frac{\sqrt{10}}{5} & \frac{\sqrt{10}}{10} \\
%     0                   & -\frac{\sqrt{2}}{2}  & 0                    & \frac{\sqrt{2}}{2}   \\
%     -\frac{\sqrt{3}}{3} & \frac{\sqrt{3}}{3}   & 0                    & \frac{\sqrt{3}}{3}   \\
%     \frac{\sqrt{2}}{2}  & 0                    & \frac{\sqrt{2}}{2}   & 0
%   \end{bmatrix}.
% \]
We can also rewrite \cref{eq: potential} into a more compact form by noting the following relations:
\begin{align*}
  \begin{bmatrix}
    v_x^{\vdash} & v_x^{\dashv}
  \end{bmatrix} \cdot \mathscr{K}_{\tau}\begin{bmatrix}
                                          v_x^{\vdash} \\
                                          v_x^{\dashv}
                                        \end{bmatrix}            & =\RoundBrackets*{v_x^{\vdash}-v_x^{\dashv}}^2,                                                                                        \\
  \begin{bmatrix}
    v_y^{\vdash} & \theta^{\vdash} & v_y^{\dashv} & \theta^{\dashv}
  \end{bmatrix} \cdot \mathscr{K}_{\nu}\begin{bmatrix}
                                         v_y^{\vdash}    \\
                                         \theta^{\vdash} \\
                                         v_y^{\dashv}    \\
                                         \theta^{\dashv}
                                       \end{bmatrix} & = L^2\RoundBrackets*{\theta^{\vdash}-\theta^{\dashv}}^2 + 3L^2\RoundBrackets*{2v_y^{\vdash}/L-2v_y^{\dashv}/L+\theta^{\vdash}+\theta^{\dashv}}^2.
\end{align*}
Those relations are useful for the subsequent derivations.
Therefore, in calculating the potential energy of the beam, two directions should be considered: the axial direction and the transverse direction.
If the axial direction is denoted by $\bm{l}$, we use $\bm{l}^\perp$ to represent the transverse direction, which is generated by rotating $\bm{l}$ counterclockwise by $\pi/2$.

\subsection{Fourier transforms}
Our analysis heavily relies on Fourier transforms, in both the discrete and continuous settings.
We define $\mathscr{I}_N \coloneqq \CurlyBrackets*{(i,j)\in \Integer^2 \colon 0 \leq i,j < N}$ as a set for indexes.
Any discrete scalar/vector-valued function $f$ can be identified as a map from $\mathscr{I}_N$ to the corresponding co-domain.
In the paper, we use $f[i,j]$ to refer to the value of $f$ at the index $(i,j)$.
We also adopt a cyclic notation for the indexes, i.e., $f[i,j]=f[i\mod N, j\mod N]$.
The Discrete Fourier Transform (DFT) of $f$ yields $\hat{f}$ (or $f^\wedge$), defined as
\[
  \hat{f}[i',j']=\frac{1}{N^2}\sum_{0\leq i,j < N} f[i,j]\exp(-2\pi\i (i i'+j j')/N),
\]
where $-N/2\leq i',j' < N/2$, $\i$ is the imaginary unit, and the summation is applied entry-wise on $f$ as for the vector-valued case.
We also denote
\[
  \mathscr{F}_N \coloneqq \CurlyBrackets*{(i',j')\in \Integer^2 \colon -N/2 \leq i,j < N/2}
\]
as the frequency domain and $\mathscr{F}_N^\circ \coloneqq \mathscr{F}_N \setminus \CurlyBrackets*{(0,0)}$ the non-zero frequency domain.
The inverse DFT ($\hat{f} \mapsto f$) is given by
\[
  f[i,j]=\sum_{-N/2\leq i',j' < N/2} \hat{f}[i',j']\exp(2\pi\i (i i'+j j')/N).
\]
As for differences, indexes in the frequency domain are marked with ``$\prime$'' to distinguish them from the indexes in the spatial domain.
The inclusion of the weights $1/N^2$ and $1$ in the DFT and inverse DFT, respectively, differs from conventional definitions but is maintained for consistency with Fourier expansions, as will be shown shortly.
Parseval's identity states that
\[
  \sum_{(i',j')\in \mathscr{F}_N} \hat{g}^*[i',j'] \hat{f}[i',j'] = \frac{1}{N^2} \sum_{(i,j)\in \mathscr{I}_N} g^*[i,j] f[i,j],
\]
where the asterisk superscripted $(\cdot)^*$ denotes the complex conjugate.

We denote $D \coloneqq (0,1)^2 \subset \Real^2$ as the unit square domain.
With a slight abuse of notation, for a scalar-valued function $f\in L^2(D)$, we can take the Fourier expansion as
\[
  f(\alpha,\beta) = \sum_{(i',j')\in \Integer^2} \hat{f}[i',j']\exp(2\pi\i (i' \alpha+j' \beta))
\]
for a.e.\ $(\alpha,\beta)\in D$, where
\[
  \hat{f}[i',j'] = \int_{D} f(\alpha,\beta)\exp(-2\pi\i (i' \alpha+j' \beta))\di \alpha \di \beta.
\]
Thanks to the orthogonality of the trigonometric bases, we have
\[
  \norm{f}_{0}^2=\sum_{(i',j')\in \Integer^2} \abs{\hat{f}[i',j']}^2,
\]
where $\norm{\cdot}_{0}$ is a shorthand notation of $L^2(D)$ norm.
We also define a trigonometric polynomial subspace
\[
  \mathscr{T}_N\coloneqq \CurlyBrackets*{\sum_{(i',j')\in \mathscr{F}_N}c[i',j']\exp(2\pi \i (i'\alpha+j'\beta))\colon c[i',j'] \in \Complex} \subset L^2(D).
\]
Then for $g\in \mathscr{T}_N$, its evaluations at grid points $\CurlyBrackets*{(i/N,j/N)}$ determine a discrete function $h[i,j]$.
We can easily check that $\hat{g}[i',j']=\hat{h}[i',j']$ for $(i',j')\in \mathscr{F}_N$, which justifies the choice of weights in the DFT and inverse DFT.
Moreover, for $f\in L^2(D)$, we can introduce $H^s(D)$ semi-norms for higher regularity as
\[
  \abs{f}_{s}^2 \coloneqq \sum_{(i',j')\in \Integer^2} \RoundBrackets*{\abs{i'}^{2s}+\abs{j'}^{2s}} \abs{\hat{f}[i',j']}^2,
\]
where $s>0$.
We denote the function space $H^s(D)$ for functions with finite $L^2(D)$ and $H^s(D)$, which essentially is the conventional Sobolev space in the Fourier series setting.
For discrete functions, the $H^s$ semi-norm is defined similarly, while the Fourier coefficients are replaced by the inverse DFT and the summation is over $(i',j')\in \mathscr{F}_N$.
Hence, if we quantify the difference between a discrete function $f_{\SSSText{D}}$ defined on $\mathscr{I}_N$ and a continuous $f_{\SSSText{C}}\in \mathscr{T}_N$ in the $L^2$ norm or $H^s$ semi-norm, it is reasonable to calculate
\begin{align*}
              & \RoundBrackets*{\sum_{(i',j')\in \mathscr{F}_N} \abs{\hat{f}_{\SSSText{D}}[i',j']-\hat{f}_{\SSSText{C}}[i',j']}^2}^{1/2}                                              \\
  \text{ or } & \RoundBrackets*{\sum_{(i',j')\in \mathscr{F}_N} \RoundBrackets*{\abs{i'}^{2s}+\abs{j'}^{2s}} \abs{\hat{f}_{\SSSText{D}}[i',j']-\hat{f}_{\SSSText{C}}[i',j']}^2}^{1/2}
\end{align*}
as a proper measure.

\section{Homogenization with the low-frequency principle}\label{sec: homogenization}
To analyze the homogenization process of beam lattices, we establish a general procedure to rigorously formulate the final mathematical statement.
The procedure involves the following steps:
\begin{enumerate}
  \item Identify the periodic cell and the translation vectors $\bm{t}_x$ and $\bm{t}_y$, ensuring that the periodic cell can tessellate the plane, with $\abs{\bm{t}_x}=\abs{\bm{t}_y}=1$. Translate the periodic cell by $N\times N$ copies along $\bm{t}_x$ and $\bm{t}_y$, scale it by $\epsilon\coloneqq 1/N$, and construct the domain $\Omega_\epsilon$.
  \item Within a periodic cell, select the nodes to place the degrees of freedom and edges for calculating the total energy.
        % ensuring that these nodes and edges are not counted twice during tessellation.
  \item Introduce fictitious forces and torques for each node and derive the balance equations for all nodes.
  \item Take the limit as $\epsilon$ approaches zero to derive the continuous (homogenized) model for the balance equations with the \emph{low-frequency principle}, resulting in a system of PDEs with PBCs.
  \item Provide a quantitative estimate in terms of $\epsilon$ for the difference between the discrete and continuous models.
\end{enumerate}

We remark that the domain $\Omega_\epsilon$ may exhibit rough boundaries.
However, as $\epsilon$ approaches to zero, $\Omega_\epsilon$ will ``converge'' to a domain $\Omega_0$.
The homogenized model will be established on $\Omega_0$.
In this section, we focus on the triangular lattice, as illustrated in \cref{fig: triangular lattice}.
%, and the extensions will be discussed in the subsequent sections.
The periodic cell is depicted in the left part of \cref{fig: triangular lattice}, with a node marked by a red cross.
Only one node is drawn in the periodic cell because the other nodes can be generated by translating the periodic cell.
The three beams in the periodic cell are colored differently and can also be referred to by their directions: $\bm{l}_1=[1/2, -\sqrt{3}/2]^\intercal$, $\bm{l}_2=[1, 0]^\intercal$, and $\bm{l}_3=[1/2, \sqrt{3}/2]^\intercal$.
The periodic cell is translated by $10\times 10$ copies along $\bm{t}_x=[1/2, -\sqrt{3}/2]^\intercal$ and $\bm{t}_y=[1/2, \sqrt{3}/2]^\intercal$, which is shown in the middle part of \cref{fig: triangular lattice}.
We can label the node in the periodic cell by $(i,j)$, where $i$ and $j$ belong to $\CurlyBrackets{0, 1,\dots, N}$.
We specifically define the coordinates of the node $(i,j)$ by $i\epsilon \bm{t}_x+ j\epsilon \bm{t}_y$, such that the $N\times N$ copies of the periodic cell occupy the domain $\CurlyBrackets*{(x, y)=v_x \bm{t}_x+v_y\bm{t}_y\colon 0< v_x, v_y < 1}$, which is exactly the domain $\Omega_0$ mentioned above.

\begin{figure}[!ht]
  \centering
  \resizebox{\textwidth}{!}{\input{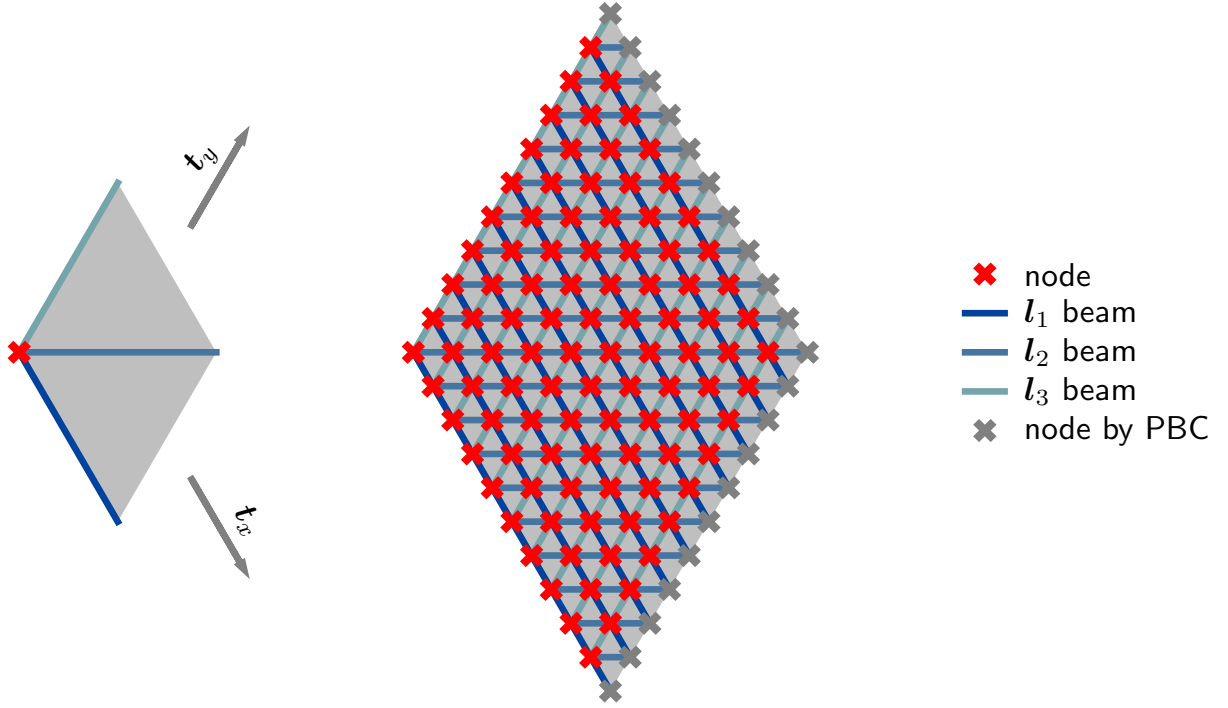}}
  \caption{The demonstration of the triangular lattice.
    (Left) A periodic cell with a node marked, and three beams colored differently and the directions of the translation vectors $\bm{t}_x$ and $\bm{t}_y$.
    (Middle) The domain $\Omega_\epsilon$ constructed by translating the periodic cell $10\times 10$ copies along $\bm{t}_x$ and $\bm{t}_y$.
    (Right) Some notations.}
  \label{fig: triangular lattice}
\end{figure}

\subsection{The discrete model}\label{subsec: discrete}
The displacement of the node $(i,j)$ is denoted by $\bm{u}_{\SSSText{D}}[i,j]=[u^x_{\SSSText{D}}[i,j], u^y_{\SSSText{D}}[i,j]]^\intercal$, and the rotation of the node is denoted by $\theta_{\SSSText{D}}[i,j]$.
We assume a PBC for the displacement and rotation fields:
\begin{alignat}{3}
   & \bm{u}_{\SSSText{D}}[0,j]=\bm{u}_{\SSSText{D}}[N,j], & \quad & \theta_{\SSSText{D}}[0,j]=\theta_{\SSSText{D}}[N,j], & \quad & \forall j\in \CurlyBrackets{0, 1,\dots, N}; \nonumber        \\
   & \bm{u}_{\SSSText{D}}[i,0]=\bm{u}_{\SSSText{D}}[i,N], & \quad & \theta_{\SSSText{D}}[i,0]=\theta_{\SSSText{D}}[i,N], & \quad & \forall i\in \CurlyBrackets{0, 1,\dots, N}.  \label{eq: PBC}
\end{alignat}
Thus, the total degrees of freedom of this system can be treated as $\CurlyBrackets{\bm{u}_{\SSSText{D}}[i,j], \theta_{\SSSText{D}}[i,j]}$ for $(i,j)\in \mathscr{I}_N$.
In the middle part of \cref{fig: triangular lattice}, the boundary nodes determined by the PBCs are colored differently for distinction.

According to the setting, all beams have the same length, which is exactly $\epsilon$.
The energy of the system is given by $\mathcal{L}=\mathcal{L}_1+\mathcal{L}_2+\mathcal{L}_3$, where $\mathcal{L}_\diamond$ is the energy associated with the beam in the direction $\bm{l}_\diamond$ with $\diamond\in\CurlyBrackets{1,2,3}$.
Precisely, we have
\begin{alignat}{1}
   & \quad \mathcal{L}_1(\bm{u}_{\SSSText{D}}, \theta_{\SSSText{D}})                                                                                                                                                                                                    \nonumber                                                      \\
   & = \frac{1}{2} \sum_{(i,j)\in \mathscr{I}_N} \Big\{\mathtt{S}\RoundBrackets*{(\bm{u}_{\SSSText{D}}[i,j]-\bm{u}_{\SSSText{D}}[i+1,j])\cdot \bm{l}_1}^2/ \epsilon + \mathtt{H} \RoundBrackets*{\theta_{\SSSText{D}}[i,j]- \theta_{\SSSText{D}}[i+1,j]}^2/\epsilon     \nonumber                                                      \\
   & \qquad + 3\mathtt{H}\RoundBrackets*{2(\bm{u}_{\SSSText{D}}[i,j]-\bm{u}_{\SSSText{D}}[i+1,j])\cdot \bm{l}_1^\perp/\epsilon+ \theta_{\SSSText{D}}[i,j]+ \theta_{\SSSText{D}}[i+1,j]}^2/\epsilon \Big\};                                                                                                         \label{eq: trl en1} \\
   & \quad \mathcal{L}_2(\bm{u}_{\SSSText{D}}, \theta_{\SSSText{D}})                                                                                                                                                                                                   \nonumber                                                       \\
   & = \frac{1}{2} \sum_{(i,j)\in \mathscr{I}_N} \Big\{\mathtt{S}\RoundBrackets*{(\bm{u}_{\SSSText{D}}[i,j]-\bm{u}_{\SSSText{D}}[i+1,j+1])\cdot \bm{l}_2}^2/\epsilon + \mathtt{H} \RoundBrackets*{\theta_{\SSSText{D}}[i,j]- \theta_{\SSSText{D}}[i+1,j+1]}^2/\epsilon \nonumber                                                       \\
   & \qquad + 3\mathtt{H}\RoundBrackets*{2(\bm{u}_{\SSSText{D}}[i,j]-\bm{u}_{\SSSText{D}}[i+1,j+1])\cdot \bm{l}_2^\perp/\epsilon+ \theta_{\SSSText{D}}[i,j]+ \theta_{\SSSText{D}}[i+1,j+1]}^2/\epsilon \Big\};                                                                                                     \label{eq: trl en2} \\
   & \quad \mathcal{L}_3(\bm{u}_{\SSSText{D}}, \theta_{\SSSText{D}})                                                                                                                                                                                                   \nonumber                                                       \\
   & = \frac{1}{2} \sum_{(i,j)\in \mathscr{I}_N} \Big\{\mathtt{S}\RoundBrackets*{(\bm{u}_{\SSSText{D}}[i,j]-\bm{u}_{\SSSText{D}}[i,j+1])\cdot \bm{l}_3}^2/\epsilon + \mathtt{H} \RoundBrackets*{\theta_{\SSSText{D}}[i,j]- \theta_{\SSSText{D}}[i,j+1]}^2/\epsilon     \nonumber                                                       \\
   & \qquad + 3\mathtt{H}\RoundBrackets*{2(\bm{u}_{\SSSText{D}}[i,j]-\bm{u}_{\SSSText{D}}[i,j+1])\cdot \bm{l}_3^\perp/\epsilon+ \theta_{\SSSText{D}}[i,j]+ \theta_{\SSSText{D}}[i,j+1]}^2/\epsilon \Big\}. \label{eq: trl en3}
\end{alignat}
We emphasize that due to the PBC, energy terms corresponding to several boundary beams are not included in the above expressions (refer to the middle part of \cref{fig: triangular lattice}).

According to the Euler--Bernoulli beam theory \cite{Bauchau2009}, we have the following relations:
\[
  \mathtt{S}=\mathbb{E} A \text{ and } \mathtt{H}= \mathbb{E} I,
\]
where $\mathbb{E}$ is the Young's modulus, $A$ is the cross-sectional area, and $I$ is the second moment of the cross-section.
Here, $\mathbb{E}$ has a unit of $[\text{Pressure}]$, $A$ has a unit of $[\text{Length}]^2$, and $I$ has a unit of $[\text{Length}]^4$.
The values of $A$ and $I$ are determined entirely by the geometry of the beam.
To guarantee that the Euler--Bernoulli beam model remain effective as the beam length approaches zero, $\mathtt{S}$ and $\mathtt{H}$ must be scaled appropriately, which leads to the following assumption.
\begin{assumption}
  Let $l$ be the length of the beam, then there exist parameters $\rho^\star$ and $\gamma$ such that
  \[
    \mathtt{S}=\gamma \rho^\star l \text{ and } \mathtt{H}= \gamma l^3,
  \]
  where $\rho^\star$ is a dimensionless parameter independent of $l$.
  % There exist parameters $\mathtt{S}^\star$ and $\mathtt{H}^\star$ that are independent of $l$, the length of the beam, such that $\mathtt{S}=\mathtt{S}^\star l$ and $\mathtt{H}=\mathtt{H}^\star l^3$.
\end{assumption}
% For later ease of reference, we also denote $\rho^\star$ by the ratio of $\mathtt{S}^\star$ to $\mathtt{H}^\star$, which is a dimensionless parameter.
The parameter $\gamma$ cannot be dimensionless, and we will later introduce scaling factors on fictitious forces and torques to cancel $\gamma$.

An important property of the DFT is that it can diagonalize shift operations.
For instance, for a discrete function $f$ defined on $\mathscr{I}_N$, represented as the map $(i, j) \mapsto f[i,j]$, it holds that
\begin{align*}
  ((i,j) \mapsto f[i+1,j])^{\wedge}   & = (i',j')\mapsto \exp(2\pi\i i'\epsilon) \hat{f}[i',j'],     \\
  ((i,j) \mapsto f[i,j+1])^{\wedge}   & = (i',j')\mapsto \exp(2\pi \i j'\epsilon)\hat{f}[i',j'],     \\
  ((i,j) \mapsto f[i+1,j+1])^{\wedge} & = (i',j')\mapsto \exp(2\pi\i (i'+j')\epsilon)\hat{f}[i',j'],
\end{align*}
where now $1/N$ is replaced by $\epsilon$.
Utilizing Parseval's identity and the shift property, the potential energy in \cref{eq: trl en1,eq: trl en2,eq: trl en3} can be reformulated in the Fourier space as
\[
  \mathcal{L}(\bm{u}_{\SSSText{D}}, \theta_{\SSSText{D}}) = \frac{\gamma}{2} \sum_{(i',j') \in \mathscr{F}_N}
  \begin{bmatrix}
    \hat{\bm{u}}_{\SSSText{D}}^*[i',j'] \\ \hat{\theta}^*_{\SSSText{D}}[i',j']
  \end{bmatrix} \cdot
  \underbrace{
    \begin{bmatrix}
      A_{\SSSText{D}}[i',j']                 & \bm{b}_{\SSSText{D}}[i',j'] \\
      -\bm{b}_{\SSSText{D}}^\intercal[i',j'] & c_{\SSSText{D}}[i',j']
    \end{bmatrix}
  }_{\coloneqq S_{\SSSText{D}}[i',j']}
  \begin{bmatrix}
    \hat{\bm{u}}_{\SSSText{D}}[i',j'] \\ \hat{\theta}_{\SSSText{D}}[i',j']
  \end{bmatrix},
\]
where the $2$-by-$2$ matrix $A_{\SSSText{D}}[i',j']$, the vector $\bm{b}_{\SSSText{D}}[i',j']$, and the scalar $c_{\SSSText{D}}[i',j']$ are split into three parts corresponding to the three beams:
\begin{multline*}
  A_{\SSSText{D}}=A_{1,\SSSText{D}}+A_{2,\SSSText{D}}+A_{3,\SSSText{D}} \text{ with } A_{1,\SSSText{D}}=4(\rho^\star \bm{l}_1 \otimes \bm{l}_1 + 12 \bm{l}_1^\perp \otimes \bm{l}_1^\perp)\frac{\sin^2(\pi i'\epsilon)}{\epsilon^2},\\
  A_{2,\SSSText{D}}=4(\rho^\star \bm{l}_2 \otimes \bm{l}_2 + 12 \bm{l}_2^\perp \otimes \bm{l}_2^\perp)\frac{\sin^2(\pi (i'+j')\epsilon)}{\epsilon^2},        \\
  \text{and } A_{3,\SSSText{D}}=4(\rho^\star \bm{l}_3 \otimes \bm{l}_3 + 12 \bm{l}_3^\perp \otimes \bm{l}_3^\perp)\frac{\sin^2(\pi j'\epsilon)}{\epsilon^2};
\end{multline*}
\begin{multline*}
  \bm{b}_{\SSSText{D}}=\bm{b}_{1,\SSSText{D}}+\bm{b}_{2,\SSSText{D}}+\bm{b}_{3,\SSSText{D}} \text{ with } \bm{b}_{1,\SSSText{D}}=12\i \bm{l}_1^\perp \frac{\sin(2\pi i'\epsilon)}{\epsilon},\\
  \bm{b}_{2,\SSSText{D}}=12\i \bm{l}_2^\perp\frac{\sin(2\pi (i'+j')\epsilon)}{\epsilon},\ \text{and } \bm{b}_{3,\SSSText{D}}=12\i \bm{l}_3^\perp \frac{\sin(2\pi j'\epsilon)}{\epsilon};
\end{multline*}
\begin{multline*}
  c_{\SSSText{D}}=c_{1,\SSSText{D}}+c_{2,\SSSText{D}}+c_{3,\SSSText{D}} \text{ with } c_{1,\SSSText{D}}=12-8\sin^2(\pi i'\epsilon),\\
  c_{2,\SSSText{D}}=12-8\sin^2(\pi (i'+j')\epsilon),\ \text{and } c_{3,\SSSText{D}}=12-8\sin^2(\pi j'\epsilon).
\end{multline*}
In the above expressions, we drop $\epsilon$ also $[i',j']$ after $A_{\SSSText{D}}$, $\bm{b}_{\SSSText{D}}$, $c_{\SSSText{D}}$ and similar terms for simplicity.
We may follow this convention if no confusion arises.
Note that for all $(i',j') \in \mathscr{F}_N$, the $3$-by-$3$ matrix $S_{\SSSText{D}}[i',j']$ defined above is a Hermitian matrix.
It is easy to see that if $i'=j'=0$, the matrix $S_{\SSSText{D}}[0,0]$ is singular.

For the later reference, we also call the objects $\CurlyBrackets{A_{1,\SSSText{D}},\bm{b}_{1,\SSSText{D}},c_{\SSSText{D}}}$ as the ``$x$-mode'', $\CurlyBrackets{A_{2,\SSSText{D}},\bm{b}_{2,\SSSText{D}},c_{\SSSText{D}}}$ as the ``$xy$-mode'', and $\CurlyBrackets{A_{3,\SSSText{D}},\bm{b}_{3,\SSSText{D}},c_{\SSSText{D}}}$ as the ``$y$-mode''.

% Therefore, we introduce a notation as
% \[
%   \omega_N \coloneqq \CurlyBrackets*{(i',j')\in \Integer^2\colon -N/2 < i', j'\leq N/2,\ (i',j')\neq (0,0)}
% \]
% where the $(0,0)$ mode is excluded from the full Fourier modes.

In the quantitative homogenization setting, we usually fix the right-hand side of the balance equation during the homogenization process.
Therefore, we introduce the following assumption, where a scaling factor $\gamma$ is included to simplify the balance equation.
\begin{assumption}
  Each node $(i,j) \in \mathscr{I}_N$ is subjected to a force $\epsilon^2\gamma \bm{f}[i,j]$ and a torque $\epsilon^2\gamma \tau[i,j]$.
\end{assumption}
% , where $\bm{f}$ and $\tau$ are in units of [Force]/[Area] and [Torque]/[Area], respectively.
The Lagrangian for the system is given by
\begin{equation}\label{eq: PBC Lag}
  \mathcal{L}(\bm{u}_{\SSSText{D}}, \theta_{\SSSText{D}})-\epsilon^2\gamma \sum_{(i,j)\in \mathscr{I}_N} \Big\{\bm{f}[i,j]\cdot \bm{u}_{\SSSText{D}}[i,j] + \tau[i,j]\theta_{\SSSText{D}}[i,j]\Big\}.
\end{equation}
Therefore, in the Fourier space, the balance equation for the $(i',j')$ mode can be written as
\begin{equation}\label{eq: D balance}
  S_{\SSSText{D}}[i',j']
  \begin{bmatrix}
    \hat{\bm{u}}_{\SSSText{D}}[i',j'] \\ \hat{\theta}_{\SSSText{D}}[i',j']
  \end{bmatrix}=
  \begin{bmatrix}
    \hat{\bm{f}}[i',j'] \\ \hat{\tau}[i',j']
  \end{bmatrix},
\end{equation}
where the scaling factor $\epsilon^2$ is canceled out due to Parseval's identity.
Because $S_{\SSSText{D}}[0,0]$ is singular, for the $(0, 0)$ mode, we require that
\[
  \begin{bmatrix}
    \hat{\bm{f}}[0,0] \\ \hat{\tau}[0,0]
  \end{bmatrix} \in \im \RoundBrackets*{S_{\SSSText{D}}[0,0]} =\im \RoundBrackets*{\begin{bmatrix}
      0 & 0 & 0  \\
      0 & 0 & 0  \\
      0 & 0 & 36
    \end{bmatrix}
  }
\]
to ensure that the balance equation is solvable.
Therefore, unique $\hat{u}_{\SSSText{D}}[0,0]$ and $\hat{\theta}_{\SSSText{D}}[0,0]$ can be determined in the subspace $(\ker S_{\SSSText{D}}[0,0])^\perp$.
The singularity of $S_{\SSSText{D}}[0,0]$ arises due to that a global displacement does not change the total potential energy of the beam lattice.
By taking $\epsilon$ to zero, the correct homogenized model should be derived from the above balance equation, while several arguments are needed to justify this process.

\subsection{The homogenization model}
%Prior to proceeding, it is necessary to establish the connection between the Fourier modes and the continuous domain $\Omega_0$.
Recall that the continuous domain $\Omega_0$ can be explicitly defined as
\[
  \Omega_0 \coloneqq  \CurlyBrackets*{\alpha\bm{t}_x + \beta\bm{t}_y\colon (\alpha, \beta)\in D} \subset \Real^2.
\]
Thus, a function defined on $D$ can be viewed as a function defined on $\Omega_0$, through the mapping $(\alpha, \beta)\mapsto \alpha\bm{t}_x + \beta\bm{t}_y$.
In the following discussion, the input of a continuous function is $(\alpha, \beta)$, and the derivatives are denoted by $\partial_\alpha$ and $\partial_\beta$, which can be pulled back to the derivatives w.r.t.\ Euclidean coordinates (i.e., $\partial_x$ and $\partial_y$) using the chain rule.
% For a smooth function $f$ defined on $\Omega_0$, it is termed periodic if it adheres to the conditions:
% \[
%   f(\alpha, 0)=f(\alpha, 1), \quad f(0, \beta)=f(1, \beta), \quad \forall (\alpha, \beta)\in (0,1)^2.
% \]
% The Fourier expansion of a periodic function $f$ is given by
% \[
%   f(\alpha, \beta)=\sum_{i',j'\in \Integer^2} \hat{f}[i',j']\exp(2\pi\i (i' \alpha+j' \beta)),
% \]
% where $\hat{f}[i',j']$ can be obtained by
% \[
%   \hat{f}[i',j']=\int_0^1 \int_0^1 f(\alpha, \beta)\exp(-2\pi\i (i' \alpha+j' \beta))\di \alpha \di \beta.
% \]
Drawing an analogy to the shift property of the DFT for Fourier expansions, we have
\[
  (\partial_\alpha f)^\wedge[i',j']=2\pi \i i' \hat{f}[i',j'], \quad (\partial_\beta f)^\wedge[i',j']=2\pi \i j' \hat{f}[i',j'].
\]
We also interpolate the fictitious point-wise forces and torques to continuous function in $\mathscr{T}_N$ as follows:
\begin{alignat*}{1}
  \bm{f}_{\SSSText{C}}(\alpha, \beta) & =\sum_{(i',j')\in \mathscr{T}_N} \hat{\bm{f}}[i',j']\exp(2\pi\i (i' \alpha+j' \beta)), \\
  \tau_{\SSSText{C}}(\alpha, \beta)   & =\sum_{(i',j')\in  \mathscr{T}_N} \hat{\tau}[i',j']\exp(2\pi\i (i' \alpha+j' \beta)).
\end{alignat*}
The continuous functions $\bm{f}_{\SSSText{C}}$ and $\tau_{\SSSText{C}}$ shall enter the homogenized PDE model as the right-hand terms.

The homogenized model must effectively capture the low-frequency modes, a concept we refer to as the low-frequency principle:
\begin{justabox}{Low-frequency principle}
  \begin{equation}\label{eq: low freq}
    \begin{multlined}
      \text{There exists a constant } M \text{ independent of } \epsilon \text{ such that} \\
      \hat{\bm{f}}[i',j']\neq \bm{0} \text{ and } \hat{\tau}[i',j']\neq 0 \text{ only for } \abs{i'}+\abs{j'}\leq M.
    \end{multlined}
    % \hat{\bm{f}}[i',j']\neq \bm{0} \text{ and } \hat{\tau}[i',j']\neq 0 \text{ only for } \abs{i'}+\abs{j'}\leq M,\text{ where } M \text{ is independent of } \epsilon.
  \end{equation}
\end{justabox}
In this regime, there are always only a finite number of equations corresponding to $\abs{i'}+\abs{j'}\leq M$ in \cref{eq: D balance} that need to be solved.
We can safely take the limit of $S_{\SSSText{D}}[i',j']$ as $\epsilon$ approaches zero for a fix index $(i',j')$, yielding:
\[
  S_{\SSSText{D}}[i',j'] \xrightarrow{\epsilon\to 0} S_{\SSSText{C}}[i',j']=\begin{bmatrix}
    A_{\SSSText{C}}[i',j']                 & \bm{b}_{\SSSText{C}}[i',j'] \\
    -\bm{b}_{\SSSText{C}}^\intercal[i',j'] & c_{\SSSText{C}}[i',j']
  \end{bmatrix},
\]
where $A_{\SSSText{C}}$, $\bm{b}_{\SSSText{C}}$ and $c_{\SSSText{C}}$ accordingly take the form as a summation of three parts:
\begin{multline*}
  A_{\SSSText{C}}=A_{1,\SSSText{C}}+A_{2,\SSSText{C}}+A_{3,\SSSText{C}} \text{ with } A_{1,\SSSText{C}}=4(\rho^\star \bm{l}_1 \otimes \bm{l}_1 + 12 \bm{l}_1^\perp \otimes \bm{l}_1^\perp)\pi^2(i')^2,\\
  A_{2,\SSSText{C}}=4(\rho^\star \bm{l}_2 \otimes \bm{l}_2 + 12 \bm{l}_2^\perp \otimes \bm{l}_2^\perp)\pi^2(i'+j')^2, \\
  \text{ and } A_{3,\SSSText{C}}=4(\rho^\star \bm{l}_3 \otimes \bm{l}_3 + 12 \bm{l}_3^\perp \otimes \bm{l}_3^\perp)\pi^2(j')^2;
\end{multline*}
\begin{multline*}
  \bm{b}_{\SSSText{C}}=\bm{b}_{1,\SSSText{C}}+\bm{b}_{2,\SSSText{C}}+\bm{b}_{3,\SSSText{C}} \text{ with } \bm{b}_{1,\SSSText{C}}=24\i \bm{l}_1^\perp \pi i',\\
  \bm{b}_{2,\SSSText{C}}=24\i \bm{l}_2^\perp \pi (i'+j'), \text{ and } \bm{b}_{3,\SSSText{C}}=24\i \bm{l}_3^\perp \pi j';
\end{multline*}
\[
  c_{\SSSText{C}}=c_{1,\SSSText{C}}+c_{2,\SSSText{C}}+c_{3,\SSSText{C}} \text{ with } c_{1,\SSSText{C}}=c_{2,\SSSText{C}}=c_{3,\SSSText{C}}=12.
\]
% \begin{alignat*}{3}
%    & A_{\SSSText{C}}=A_{1,\SSSText{C}}+A_{2,\SSSText{C}}+A_{3,\SSSText{C}},                                              & \quad & \bm{b}_{\SSSText{C}}=\bm{b}_{1,\SSSText{C}}+\bm{b}_{2,\SSSText{C}}+\bm{b}_{3,\SSSText{C}}, & \quad & c_{\SSSText{C}}=c_{1,\SSSText{C}}+c_{2,\SSSText{C}}+c_{3,\SSSText{C}}, \\
%    & A_{1,\SSSText{C}}=4(\rho^\star \bm{l}_1 \otimes \bm{l}_1 + 12 \bm{l}_1^\perp \otimes \bm{l}_1^\perp)\pi^2(i')^2,    & \quad & \bm{b}_{1,\SSSText{C}}=24\i \bm{l}_1^\perp \pi i',                                         & \quad & c_{1,\SSSText{C}}=12,                                                  \\
%    & A_{2,\SSSText{C}}=4(\rho^\star \bm{l}_2 \otimes \bm{l}_2 + 12 \bm{l}_2^\perp \otimes \bm{l}_2^\perp)\pi^2(i'+j')^2, & \quad & \bm{b}_{2,\SSSText{C}}=24\i \bm{l}_2^\perp \pi (i'+j'),                                    & \quad & c_{2,\SSSText{C}}=12,                                                  \\
%    & A_{3,\SSSText{C}}=4(\rho^\star \bm{l}_3 \otimes \bm{l}_3 + 12 \bm{l}_3^\perp \otimes \bm{l}_3^\perp)\pi^2(j')^2,    & \quad & \bm{b}_{3,\SSSText{C}}=24\i \bm{l}_3^\perp \pi j',                                         & \quad & c_{3,\SSSText{C}}=12.
% \end{alignat*}
Therefore, in the Fourier space, the balance equation for the $(i',j')$ mode for the homogenized model can be written as
\begin{equation}\label{eq: C balance}
  S_{\SSSText{C}}[i',j']
  \begin{bmatrix}
    \hat{\bm{u}}_{\SSSText{C}}[i',j'] \\ \hat{\theta}_{\SSSText{C}}[i',j']
  \end{bmatrix}=
  \begin{bmatrix}
    \hat{\bm{f}}[i',j'] \\ \hat{\tau}[i',j']
  \end{bmatrix}.
\end{equation}

We can also interpret \cref{eq: C balance} as a PDE model.
Define the \emph{differential operator} $\mathcal{S}_{\SSSText{H}}$ as
\[
  \mathcal{S}_{\SSSText{H}}=\begin{bmatrix}
    A_{\SSSText{H}}                 & \bm{b}_{\SSSText{H}} \\
    -\bm{b}_{\SSSText{H}}^\intercal & c_{\SSSText{H}}
  \end{bmatrix},
\]
where
\begin{multline*}
  A_{\SSSText{H}}=A_{1,\SSSText{H}}+A_{2,\SSSText{H}}+A_{3,\SSSText{H}} \text{ with } A_{1,\SSSText{H}}=-(\rho^\star \bm{l}_1 \otimes \bm{l}_1 + 12 \bm{l}_1^\perp \otimes \bm{l}_1^\perp)\partial_{\alpha\alpha}, \\
  A_{2,\SSSText{H}}=-(\rho^\star \bm{l}_2 \otimes \bm{l}_2 + 12 \bm{l}_2^\perp \otimes \bm{l}_2^\perp)(\partial_{\alpha\alpha}+2\partial_{\alpha\beta}+\partial_{\beta\beta}),\\
  \text{ and } A_{3,\SSSText{H}}=-(\rho^\star \bm{l}_3 \otimes \bm{l}_3 + 12 \bm{l}_3^\perp \otimes \bm{l}_3^\perp)\partial_{\beta\beta};
\end{multline*}
\begin{multline*}
  \bm{b}_{\SSSText{H}}=\bm{b}_{1,\SSSText{H}}+\bm{b}_{2,\SSSText{H}}+\bm{b}_{3,\SSSText{H}} \text{ with } \bm{b}_{1,\SSSText{H}}=12\bm{l}_1^\perp \partial_\alpha, \\
  \bm{b}_{2,\SSSText{H}}=12 \bm{l}_2^\perp (\partial_\alpha+\partial_\beta), \text{ and } \bm{b}_{3,\SSSText{H}}=12\bm{l}_3^\perp \partial_\beta;
\end{multline*}
\[
  c_{\SSSText{H}}=c_{1,\SSSText{H}}+c_{2,\SSSText{H}}+c_{3,\SSSText{H}} \text{ with } c_{1,\SSSText{H}} = c_{2,\SSSText{H}}=12 = c_{3,\SSSText{H}}=12.
\]
% \begin{alignat*}{3}
%    & A_{\SSSText{H}}=A_{1,\SSSText{H}}+A_{2,\SSSText{H}}+A_{3,\SSSText{H}},                                                                                                       & \quad & \bm{b}_{\SSSText{H}}=\bm{b}_{1,\SSSText{H}}+\bm{b}_{2,\SSSText{H}}+\bm{b}_{3,\SSSText{H}}, & \quad & c_{\SSSText{H}}=c_{1,\SSSText{H}}+c_{2,\SSSText{H}}+c_{3,\SSSText{H}}, \\
%    & A_{1,\SSSText{H}}=-(\rho^\star \bm{l}_1 \otimes \bm{l}_1 + 12 \bm{l}_1^\perp \otimes \bm{l}_1^\perp)\partial_{\alpha\alpha},                                                 & \quad & \bm{b}_{1,\SSSText{H}}=24\bm{l}_1^\perp \partial_\alpha,                                   & \quad & c_{1,\SSSText{H}}=12,                                                  \\
%    & A_{2,\SSSText{H}}=-(\rho^\star \bm{l}_2 \otimes \bm{l}_2 + 12 \bm{l}_2^\perp \otimes \bm{l}_2^\perp)(\partial_{\alpha\alpha}+2\partial_{\alpha\beta}+\partial_{\beta\beta}), & \quad & \bm{b}_{2,\SSSText{H}}=24 \bm{l}_2^\perp (\partial_\alpha+\partial_\beta),                 & \quad & c_{2,\SSSText{H}}=12,                                                  \\
%    & A_{3,\SSSText{H}}=-(\rho^\star \bm{l}_3 \otimes \bm{l}_3 + 12 \bm{l}_3^\perp \otimes \bm{l}_3^\perp)\partial_{\beta\beta},                                                   & \quad & \bm{b}_{3,\SSSText{H}}=24\bm{l}_3^\perp \partial_\beta,                                    & \quad & c_{3,\SSSText{H}}=12,
% \end{alignat*}
We can establish a PDE model defined on $D$ as:
\begin{justabox}{Continuum PDE model}
  \begin{equation}\label{eq: PDE model}
    \text{Find periodic functions } \bm{u}_{\SSSText{C}} \text{ and } \theta_{\SSSText{C}} \text{ such that } \\
    \mathcal{S}_{\SSSText{H}}\begin{bmatrix}
      \bm{u}_{\SSSText{C}} \\ \theta_{\SSSText{C}}
    \end{bmatrix}=
    \begin{bmatrix}
      \bm{f}_{\SSSText{C}} \\ \tau_{\SSSText{C}}
    \end{bmatrix} \text{ in } D.
  \end{equation}
\end{justabox}
Observe that the differential operator $\mathcal{S}_{\SSSText{H}}$ is now defined on the bulk domain $D$.
Consequently, if the boundary conditions for the discrete model are modified, the boundary conditions for the homogenized model will change accordingly, while we can postulate that the PDE part remains unchanged.
Meanwhile, it can be transformed back to the Euclidean coordinates by the chain rule, resulting in a PDE model defined on $\Omega_0$.

Now, the question arises whether the homogenization model is valid beyond the low-frequency principle.

\subsection{An observation from numerical computations}
In periodic homogenization, we use a periodic function $\kappa$ to represent the coefficient within a periodic cell and a dilation factor $\epsilon$ to scale the cell (i.e., $\kappa(x/\epsilon)$), thereby representing the coefficient throughout the entire domain $\Omega$.
These coefficients are employed to formulate PDE models and for simplicity, we only consider a scalar-type PDE problem rather than a system of PDEs.
There exists a homogenized coefficient $\kappa_0$ that defines a homogenized problem, whose solution describes the macroscopic behavior as $\epsilon$ approaches zero.
A classic result in periodic homogenization provides a quantitative estimate for the difference between the multiscale and homogenized solutions with the same right-hand side:
\begin{theorem}[Ref.\ \cite{Jikov1994,Bensoussan2011,Shen2018}]\label{thm: periodic homogenization}
  Let $u_\epsilon$ be the solution to the multiscale PDE problem:
  \[
    \text{find } u_\epsilon \in H^1_0(\Omega) \text{ such that } -\Div(\kappa(x/\epsilon) \nabla u_\epsilon) = f.
  \]
  Let $u_0$ be the solution to the homogenized PDE problem:
  \[
    \text{find } u_0 \in H^1_0(\Omega) \text{ such that } -\Div(\kappa_0 \nabla u_0) = f.
  \]
  Under the regularity assumptions for the domain $\Omega$ and the coefficient $\kappa$, it holds that
  \[
    \norm{u_\epsilon-u_0}_{L^2(\Omega)} \leq C \epsilon \norm{f}_{L^2(\Omega)},
  \]
  where $C$ is a positive constant independent of $\epsilon$ and $f$.
\end{theorem}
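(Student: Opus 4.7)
The plan is to follow the classical two-scale asymptotic expansion method. I would postulate $u_\epsilon(x) = u_0(x) + \epsilon u_1(x, x/\epsilon) + \epsilon^2 u_2(x, x/\epsilon) + \cdots$, where $u_1$ and $u_2$ are periodic in the fast variable $y = x/\epsilon$, substitute this ansatz into $-\Div(\kappa(x/\epsilon)\nabla u_\epsilon) = f$, and equate powers of $\epsilon$. The $O(\epsilon^{-2})$ term forces $u_0$ to be independent of $y$; the $O(\epsilon^{-1})$ term yields cell problems $-\Div_y(\kappa(y)(e_j + \nabla_y \chi_j(y))) = 0$ on the unit cell $Y$, which define the first-order correctors $\chi_j$; and the $O(1)$ term, together with the standard solvability (Fredholm) condition in the fast variable, produces the homogenized equation for $u_0$ with effective tensor $\kappa_0 = \int_Y \kappa(y)(I + \nabla_y \chi(y))\,\di y$.

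With the formal expansion in hand, I would introduce the two-scale residual $w_\epsilon(x) \coloneqq u_\epsilon(x) - u_0(x) - \epsilon\, \chi_j(x/\epsilon)\, \partial_j u_0(x)$ and compute $-\Div(\kappa(x/\epsilon) \nabla w_\epsilon)$ directly. By the choice of $\chi_j$ and the homogenized equation, the leading oscillatory terms cancel, and what remains is a source that is $O(\epsilon)$ in $H^{-1}(\Omega)$ provided $u_0 \in H^2(\Omega)$; this regularity follows from standard elliptic theory when $\Omega$ is sufficiently smooth and $f \in L^2(\Omega)$. A standard energy estimate, using the uniform ellipticity of $\kappa$, then gives $\norm{w_\epsilon}_{H^1} \lesssim \epsilon \norm{u_0}_{H^2} \lesssim \epsilon \norm{f}_{L^2}$, and the triangle inequality together with the trivial bound $\norm{\epsilon\, \chi(x/\epsilon) \cdot \nabla u_0}_{L^2} = O(\epsilon)$ closes the $L^2$ estimate.

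The main obstacle is the boundary layer: the ansatz $u_0 + \epsilon\, \chi(x/\epsilon) \cdot \nabla u_0$ does not vanish on $\partial\Omega$, so $w_\epsilon$ as defined above fails to lie in $H^1_0(\Omega)$ and Lax--Milgram cannot be invoked directly on it. To fix this I would multiply the corrector term by a smooth cutoff $\eta_\epsilon$ that vanishes in an $\epsilon$-neighborhood of $\partial\Omega$ and analyze the perturbed residual. The additional error is concentrated in an $\epsilon$-thin strip near $\partial\Omega$ and is controlled by trace inequalities together with the measure of the strip; recovering the sharp $O(\epsilon)$ rate in $L^2$ (as opposed to the $O(\sqrt{\epsilon})$ rate one obtains naively in $H^1$) then requires an Aubin--Nitsche duality argument against the adjoint homogenized problem. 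This boundary-layer analysis is the technically delicate ingredient that pins down the exponent in the final estimate.
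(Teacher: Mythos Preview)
The paper does not prove this theorem at all: it is stated as a classical result with citations to \cite{Jikov1994,Bensoussan2011,Shen2018} and is used only to motivate the contrast with beam-lattice homogenization, where an estimate of the form \eqref{eq: l2 convergence wrong} fails. There is therefore no in-paper proof to compare against.

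Your sketch is a faithful outline of the standard argument in the cited references: two-scale ansatz, cell problems defining the correctors $\chi_j$, the residual $w_\epsilon = u_\epsilon - u_0 - \epsilon\,\chi_j(x/\epsilon)\,\partial_j u_0$, boundary-layer cutoff, and an Aubin--Nitsche duality step to upgrade the $O(\sqrt{\epsilon})$ energy-norm bound to $O(\epsilon)$ in $L^2$. That is precisely the machinery developed in Shen's monograph and the earlier books, so if you were asked to supply a proof this would be the right route; but for the purposes of this paper no proof is expected.
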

Thus, one might anticipate a result analogous to \cref{thm: periodic homogenization} for beam lattice homogenization as
\begin{equation} \label{eq: l2 convergence wrong}
  \norm{\bm{u}_{\SSSText{D}}-\bm{u}_{\SSSText{C}}}_{0}+\norm{\theta_{\SSSText{D}}-\theta_{\SSSText{C}}}_{0} \leq C \epsilon^r\RoundBrackets*{\norm{\bm{f}}_{0}+\norm{\tau}_{0}},
\end{equation}
where $C$ and $r$ are positive constants independent of $\epsilon$ and $\bm{f}$ and $\tau$.
However, this is not the case.
Specifically, for given right-hand sides $\bm{f}$ and $\tau$ in the $L^2$ sense, the convergence in the sense of \cref{eq: l2 convergence wrong} to the homogenized model implies that
\begin{equation} \label{eq: l2 convergence}
  \max_{(i',j') \in \mathscr{F}_N^\circ} \mathbf{diff}(i',j',\epsilon) \xrightarrow{\epsilon\to 0} 0,
\end{equation}
where $\mathbf{diff}(i',j',\epsilon) \coloneqq \norm{S_{\SSSText{D}}^{-1}[i',j'] - S_{\SSSText{C}}^{-1}[i',j']}$.
It is worth noting that the maximization is taken over $\mathscr{F}_N^\circ$ rather than $\mathscr{F}_N$ due to the singularity of $S_{\SSSText{D}}[0,0]$ and $S_{\SSSText{C}}[0,0]$.
The statement \cref{eq: l2 convergence} can be tested through numerical computations, with the results presented in \cref{fig: errors}.
We select $N$ ranging from $2^2$ to $2^7$, so $\epsilon$ decreases from $1/4$ to $1/128$. As a larger $\rho^\star$ implies that tension is more dominant than bending, we vary $\rho^\star$ to study its effect on the convergence.
The left plot in \cref{fig: errors} displays the \emph{maximal} $\mathbf{diff}(i',j',\epsilon)$ for $(i',j')\in \mathscr{F}_N^\circ$, while the right plot shows the \emph{maximal} $\mathbf{diff}(i',j',\epsilon)$ for $(i',j')\in \mathscr{F}_N^\circ$ with $\abs{i'}+\abs{j'}\leq 10$.
We can simply observe from the left plot that there is no convergence for all Fourier modes as $\epsilon$ approaches zero.
For certain $\rho^\star$, the difference can even increase as $\epsilon$ decreases.
On the contrary, the right plot shows that if $\epsilon$ is sufficiently small (e.g., $\epsilon \leq 2^{-5}$), the maximal difference decays.
This indicates that the convergence is achieved uniformly for low-frequency modes, consistent with the anticipation from the low-frequency principle \eqref{eq: low freq}.

\begin{figure}[!ht]
  \centering
  \resizebox{\textwidth}{!}{\input{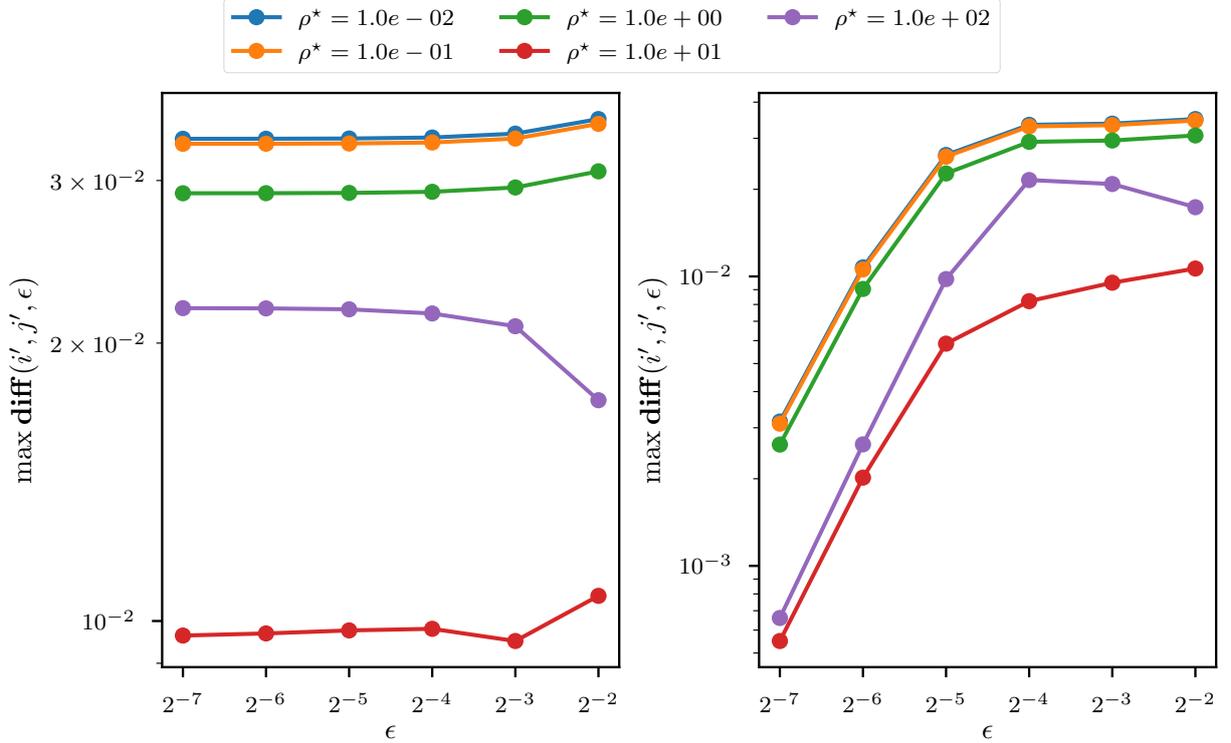}}
  \caption{Examine the convergence of the discrete model to the homogenized model with different $\rho^\star$.
    (Left) The y-axis represents the maximal $\mathbf{diff}(i',j',\epsilon)$ with $(i',j') \in \mathscr{F}_N^\circ$.
    (Right) The y-axis represents the maximal $\mathbf{diff}(i',j',\epsilon)$ with $(i',j') \in \mathscr{F}_N^\circ$ and $\abs{i'}+\abs{j'}\leq 10$.
  }
  \label{fig: errors}
\end{figure}

Kumar and McDowell suggested a special treatment for the $c_{\SSSText{D}}$ entry in $S_{\SSSText{D}}$ in obtaining the homogenized model \cite{Kumar2004}.
They proposed retaining the $\bigO(\epsilon^2)$ terms in $c_{\SSSText{D}}$, resulting in a different homogenization model that contains $\epsilon$ as an additional parameter.
For distinction, we denote this model as
\[
  S_{\SSSText{KM}}[i',j'] = \begin{bmatrix}
    A_{\SSSText{KM}}[i',j']                 & \bm{b}_{\SSSText{KM}}[i',j'] \\
    -\bm{b}_{\SSSText{KM}}^\intercal[i',j'] & c_{\SSSText{KM}}[i',j']
  \end{bmatrix}
\]
for the $(i',j')$ Fourier mode.
Except for the $c_{\SSSText{KM}}$ term, all other entries in $S_{\SSSText{KM}}$ are the same as those in $S_{\SSSText{C}}$ derived from the low-frequency principle.
Specifically, we have
\[
  c_{\SSSText{KM}} = 12 - 8\epsilon^2\pi^2 (i')^2 + 12 - 8\epsilon^2\pi^2 (i'+j')^2 + 12 - 8\epsilon^2\pi^2 (j')^2.
\]
We then numerically examine the convergence of the discrete model to the Kumar--McDowell homogenized model, presenting the results in \cref{fig: km errors}.
Similar to \cref{fig: errors}, we observe that for all Fourier modes, the convergence cannot hold as $\epsilon$ approaches zero.
The convergence pronounces for low-frequency modes, as shown in the right plot of \cref{fig: km errors}.

\begin{figure}[!ht]
  \centering
  \resizebox{\textwidth}{!}{\input{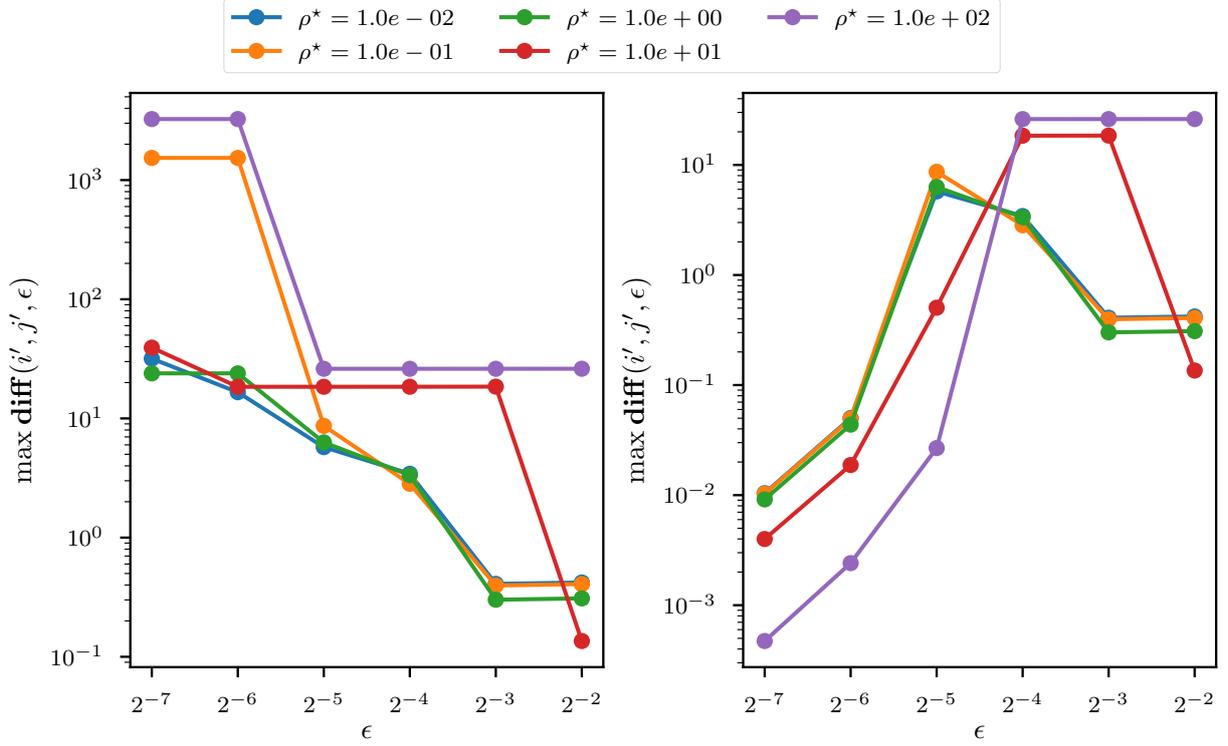}}
  \caption{Examine the convergence of the discrete model to the Kumar--McDowell homogenized model with different $\rho^\star$.
    (Left) The y-axis represents the maximal $\mathbf{diff}(i',j',\epsilon)$ with $(i',j') \in \mathscr{F}_N^\circ$.
    (Right) The y-axis represents the maximal $\mathbf{diff}(i',j',\epsilon)$ with $(i',j') \in \mathscr{F}_N^\circ$ and $\abs{i'}+\abs{j'}\leq 10$.
  }
  \label{fig: km errors}
\end{figure}

The observations above indicate an intrinsic difference between beam lattice homogenization and periodic homogenization.
When given general $L^2$-type right-hand terms $\bm{f}$ and $\tau$, the discrete model does not converge to the homogenized model as $\epsilon$ approaches zero.
Because in the discrete model, each node possesses two different physical variables: displacement and rotation.
The relationship between displacement and rotation is not akin to that of a spring.
Consequently, the homogenized model is not a Laplacian-type PDE, as seen in periodic homogenization.
% To further investigate beam lattice homogenization, we need to move beyond the assumption that the right-hand side is $L^2$ and consider a more regular setting.

\section{Quantitative homogenization}\label{sec: quantitative}
% To simplify the notations, we take $\mathtt{H}^\star=1$ in the following analysis.
In this section, when comparing two non-negative quantities $a$ and $b$, we introduce a notation $a \lesssim_p b$ to represent the relation $a \leq C b$, where $C$ is a positive constant depending on the parameter $p$.
The notation $a \gtrsim_p b$ is defined similarly, and $a \approx_p b$ means that $a \lesssim_p b$ and $a \gtrsim_p b$.

Considering that $\hat{\bm{u}}_{\SSSText{D}}$ and $\hat{\theta}_{\SSSText{D}}$ are two different physical variables, we consider a field-splitting formulation for \cref{eq: D balance}.
Precisely, we obtain that
\begin{align}
  \underbrace{\RoundBrackets*{A_{\SSSText{D}}+\bm{b}_{\SSSText{D}}\otimes \bm{b}_{\SSSText{D}}/c_{\SSSText{D}}}}_{\coloneqq B_{\SSSText{D}}} \hat{\bm{u}}_{\SSSText{D}} & = \hat{\bm{f}}-\bm{b}_{\SSSText{D}}\hat{\tau}/c_{\SSSText{D}},                                     \label{eq: D u}     \\
  \hat{\theta}_{\SSSText{D}}                                                                                                                                            & = \hat{\tau}/c_{\SSSText{D}}+\bm{b}_{\SSSText{D}}\cdot \hat{\bm{u}}_{\SSSText{D}}/c_{\SSSText{D}}. \label{eq: D theta}
\end{align}
Similarly, we can derive an analogous formulation from \cref{eq: C balance}:
\begin{align}
  \underbrace{\RoundBrackets*{A_{\SSSText{C}}+\bm{b}_{\SSSText{C}}\otimes \bm{b}_{\SSSText{C}}/c_{\SSSText{C}}}}_{\coloneqq B_{\SSSText{C}}} \hat{\bm{u}}_{\SSSText{C}} & = \hat{\bm{f}}-\bm{b}_{\SSSText{C}}\hat{\tau}/c_{\SSSText{C}},                                     \label{eq: C u}     \\
  \hat{\theta}_{\SSSText{C}}                                                                                                                                            & = \hat{\tau}/c_{\SSSText{C}}+\bm{b}_{\SSSText{C}}\cdot \hat{\bm{u}}_{\SSSText{C}}/c_{\SSSText{C}}. \label{eq: C theta}
\end{align}
Here $B_{\SSSText{D}}$ and $B_{\SSSText{C}}$ are also the Schur complements (Ref.\ \cite{Zhang2005}) of $S_{\SSSText{D}}$ and $S_{\SSSText{C}}$, respectively.
Our strategy starts to establish a quantitative estimate on $B_{\SSSText{D}}^{-1} -B_{\SSSText{C}}^{-1}$.

We can first observe that $B_{\SSSText{D}}$ and $B_{\SSSText{C}}$ are both real symmetric matrices.
It is natural to consider the quadratic forms associated with $B_{\SSSText{D}}$ and $B_{\SSSText{C}}$.
For any $\bm{v} \in \Real^2$, we have
\begin{align*}
  \bm{v} \cdot B_{\SSSText{D}} \bm{v} & = 4\rho^\star\RoundBrackets*{\abs{\frac{\sin(\pi i' \epsilon)}{\epsilon}\bm{l}_1\cdot\bm{v}}^2  +  \abs{\frac{\sin(\pi(i'+j')\epsilon)}{\epsilon}\bm{l}_2\cdot\bm{v}}^2 + \abs{\frac{\sin(\pi j' \epsilon)}{\epsilon}\bm{l}_3\cdot \bm{v}}^2}             \\
                                      & \quad +48\RoundBrackets*{\abs{\frac{\sin(\pi i' \epsilon)}{\epsilon}\bm{l}_1^\perp\cdot\bm{v}}^2 + \abs{\frac{\sin(\pi(i'+j')\epsilon)}{\epsilon}\bm{l}_2^\perp\cdot\bm{v}}^2 + \abs{\frac{\sin(\pi j' \epsilon)}{\epsilon}\bm{l}_3^\perp\cdot \bm{v}}^2} \\
                                      & \quad - \frac{24^2}{c_{\SSSText{D}}}\bigg\lvert \cos(\pi i' \epsilon) \frac{\sin(\pi i' \epsilon)}{\epsilon}\bm{l}_1^\perp\cdot \bm{v}+ \cos(\pi(i'+j')\epsilon) \frac{\sin(\pi (i'+j') \epsilon)}{\epsilon}\bm{l}_2^\perp\cdot \bm{v}                    \\
                                      & \quad \quad \quad \quad \quad \quad + \cos(\pi j' \epsilon) \frac{\sin(\pi j' \epsilon)}{\epsilon}\bm{l}_3^\perp\cdot \bm{v}\bigg\rvert^2
\end{align*}
and
\begin{align*}
  \bm{v} \cdot B_{\SSSText{C}} \bm{v} & = 4\rho^\star\RoundBrackets*{\abs{\pi i' \bm{l}_1\cdot\bm{v}}^2  + \abs{\pi(i'+j')\bm{l}_2\cdot\bm{v}}^2 + \abs{\pi j'\bm{l}_3\cdot \bm{v}}^2}             \\
                                      & \quad +48\RoundBrackets*{\abs{\pi i'\bm{l}_1^\perp\cdot\bm{v}}^2 + \abs{\pi(i'+j')\bm{l}_2^\perp\cdot\bm{v}}^2 + \abs{\pi j'\bm{l}_3^\perp\cdot \bm{v}}^2} \\
                                      & \quad - \frac{24^2}{c_{\SSSText{C}}}\abs{\pi i'\bm{l}_1^\perp\cdot \bm{v}+\pi(i'+j')\bm{l}_2^\perp\cdot \bm{v} + \pi j'\bm{l}_3^\perp\cdot \bm{v}}^2.
\end{align*}
It may not be immediately clear that $B_{\SSSText{D}}$ and $B_{\SSSText{C}}$ are positive definite due to the presence of negative terms.
However, we can establish an identity in the following lemma which provides a positive answer to this question.
\begin{lemma}\label{lem: strange id}
  For  vectors $\bm{w}_1,\bm{w}_2,\dots,\bm{w}_n$ and angles $\theta_1,\theta_2,\dots,\theta_n$, it holds that for any $\bm{v}$,
  \begin{align}
           & \quad \sum_{t=1}^n \abs{\bm{w}_t \cdot \bm{v}}^2 - \frac{12}{c} \abs{\sum_{t=1}^n \cos \theta_t \bm{w}_t \cdot \bm{v}}^2                                                                                                                                 \\ \nonumber
    \qquad & = \frac{4}{c}\RoundBrackets*{\sum_{t'=1}^n \sin^2 \theta_{t'}} \sum_{t=1}^n \abs{\bm{w}_{t}\cdot \bm{v}}^2 + \frac{6}{c} \sum_{t=1}^n \sum_{t'}^{t'\neq t} \abs{(\cos\theta_{t'}\bm{w}_t-\cos \theta_{t}\bm{w}_{t'})\cdot \bm{v}}^2, \label{eq: good id}
  \end{align}
  where $c = \sum_{t=1}^n \RoundBrackets*{4 + 8 \cos^2 \theta_t}=\sum_{t=1}^n \RoundBrackets*{12 - 8 \sin^2 \theta_t}$.
\end{lemma}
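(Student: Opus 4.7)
\textbf{Proof plan for \Cref{lem: strange id}.}
The plan is to fix $\bm{v}$ and reduce the claim to a purely algebraic identity in the scalars
$a_t \coloneqq \bm{w}_t\cdot\bm{v}$ and $x_t \coloneqq \cos\theta_t$, since every quantity appearing on either side of \cref{eq: good id} depends on the vectors only through these inner products. With this notation, using $\sum_t \sin^2\theta_t = n - \sum_t x_t^2$ and $c = 4n + 8\sum_t x_t^2$, the claim reduces to
\[
  \sum_{t} a_t^2 \;-\; \frac{12}{c}\Bigl(\sum_t x_t a_t\Bigr)^{2}
  \;=\; \frac{4(n-\sum_t x_t^2)}{c}\sum_{t} a_t^2 \;+\; \frac{6}{c}\sum_{t}\sum_{t'\neq t}(x_{t'} a_t - x_t a_{t'})^{2}.
\]

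The key step is to collapse the double sum on the right via the Lagrange identity. Concretely, I would expand
\[
  \sum_{t}\sum_{t'\neq t}(x_{t'} a_t - x_t a_{t'})^2
  \;=\; 2\sum_{t}\sum_{t'\neq t}\bigl(x_{t'}^2 a_t^2 - x_t x_{t'} a_t a_{t'}\bigr),
\]
and recognize that $\sum_t\sum_{t'\neq t} x_t x_{t'} a_t a_{t'} = (\sum_t x_t a_t)^2 - \sum_t x_t^2 a_t^2$ while $\sum_t\sum_{t'\neq t} x_{t'}^2 a_t^2 = (\sum_t x_t^2)(\sum_t a_t^2) - \sum_t x_t^2 a_t^2$. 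Together these give the clean identity
\[
  \sum_{t}\sum_{t'\neq t}(x_{t'} a_t - x_t a_{t'})^{2}
  \;=\; 2\Bigl(\sum_t x_t^2\Bigr)\Bigl(\sum_t a_t^2\Bigr) \;-\; 2\Bigl(\sum_t x_t a_t\Bigr)^{2}.
\]

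Substituting this back into the right-hand side, the cross-term $-(12/c)(\sum_t x_t a_t)^2$ matches immediately, and the remaining coefficient of $\sum_t a_t^2$ is $\tfrac{1}{c}\bigl(4(n-\sum_t x_t^2) + 12\sum_t x_t^2\bigr) = \tfrac{1}{c}(4n + 8\sum_t x_t^2) = 1$, which is exactly what is required. Since every manipulation is an equality, the identity holds for all $\bm{v}$.

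I do not expect any genuine obstacle: the proof is a one-line algebraic check once one notices that the second sum on the right of \cref{eq: good id} is nothing but (twice) the classical Lagrange/Cauchy--Schwarz determinant identity applied to $(x_t)$ and $(a_t)$. The only care needed is in counting: the sum runs over ordered pairs $t\neq t'$, so a factor of $2$ separates it from the usual unordered form $\sum_{t<t'}(x_{t'}a_t-x_t a_{t'})^2 = (\sum x_t^2)(\sum a_t^2)-(\sum x_t a_t)^2$.
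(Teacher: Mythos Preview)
Your proof is correct and follows essentially the same strategy as the paper: reduce to scalars $a_t=\bm{w}_t\cdot\bm{v}$, expand, and match coefficients. The only cosmetic difference is that the paper works from the left-hand side, rewriting the diagonal coefficient $1-12\cos^2\theta_t/c$ and then grouping the cross terms into the squares $(\cos\theta_{t'}\bm{w}_t-\cos\theta_t\bm{w}_{t'})\cdot\bm{v}$, whereas you collapse the right-hand side in one stroke via the Lagrange identity; the underlying computation is the same.
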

\begin{proof}
  We first expand the term $\abs{\sum_{t=1}^n \cos \theta_t \bm{w}_t \cdot \bm{v}}^2$, which gives
  \[
    \sum_{t=1}^n \cos^2 \theta_t \abs{\bm{w}_t \cdot \bm{v}}^2 + \sum_{t=1}^n \sum_{t'}^{t'\neq t} \cos \theta_t \cos \theta_{t'} \bm{v}\cdot \bm{w}_t \otimes \bm{w}_{t'}  \bm{v}.
  \]
  The coefficient before $\abs{\bm{w}_t\cdot \bm{v}}^2$ can be simplified as
  \begin{align*}
    1-\frac{12\cos^2\theta_t}{c} & = \frac{4-4\cos^2\theta_t}{c}+\sum_{t'}^{t'\neq t} \frac{4+8\cos^2\theta_{t'}}{c}                                     \\
                                 & = \frac{4}{c}\sin^2\theta_t + \frac{1}{c}\sum_{t'}^{t'\neq t} \RoundBrackets*{12\cos^2\theta_{t'}+4\sin^2\theta_{t'}} \\
                                 & = \frac{4}{c}\sum_{t'=1}^n \sin^2 \theta_{t'} + \frac{12}{c} \sum_{t'}^{t'\neq t} \cos^2 \theta_{t'}.
  \end{align*}
  Next, We check the remaining term, which leads to
  \begin{align*}
     & \quad \sum_{t=1}^n \sum_{t'}^{t'\neq t} \cos^2 \theta_{t'}\abs{\bm{w}_t \cdot \bm{v}}^2-\cos \theta_t \cos \theta_{t'} \bm{v}\cdot \bm{w}_t \otimes \bm{w}_{t'}  \bm{v} \\
     & =\frac{1}{2} \sum_{t=1}^n \sum_{t'}^{t'\neq t} \abs{(\cos\theta_{t'}\bm{w}_t-\cos \theta_{t}\bm{w}_{t'})\cdot \bm{v}}^2.
  \end{align*}
  We hence establish the identity.
\end{proof}
By setting $n=3$ and $[\bm{w}_1,\ \bm{w}_2,\ \bm{w}_3]=[\sin(\pi i' \epsilon)/\epsilon \bm{l}_1^\perp,\sin(\pi (i'+j') \epsilon)/\epsilon \bm{l}_2^\perp,\sin(\pi j' \epsilon)/\epsilon \bm{l}_3^\perp]$ or $[\pi i'\bm{l}_1^\perp,\pi (i'+j')\bm{l}_2^\perp,\pi j'\bm{l}_3^\perp]$, we can derive that $B_{\SSSText{D}}$ and $B_{\SSSText{C}}$ are all positive semi-definite.
However, it is not enough for quantitative analysis.
In \cref{prop: coercive D}, we refine this result by providing an estimate on the minimal eigenvalue of $B_{\SSSText{D}}$.
Before reaching \cref{prop: coercive D}, we establish \cref{lem: coercive 1,lem: quadratic form estimate} to facilitate the proof.
\begin{lemma}\label{lem: coercive 1}
  For arbitrary 2D vectors $\bm{m}_1$ and $\bm{m}_2$ satisfying $\abs{\bm{m}_1}=\abs{\bm{m}_2}=1$ and $\bm{m}_1$ not parallel to $\bm{m}_2$, it holds that for all $\bm{v} \in \Real^2$,
  \begin{align*}
     & \frac{\sin^2(\pi i' \epsilon)}{\epsilon^2} \abs{\bm{m}_1 \cdot \bm{v}}^2 + \frac{\sin^2(\pi j' \epsilon)}{\epsilon^2} \abs{\bm{m}_2 \cdot \bm{v}}^2 + \abs{\RoundBrackets*{\frac{\sin(\pi i' \epsilon)}{\epsilon}\bm{m}_1^\perp- \frac{\sin(\pi j' \epsilon)}{\epsilon}\bm{m}_2^\perp}\cdot \bm{v}}^2 \\
     & \gtrsim_{\bm{m}_1, \bm{m}_2} \RoundBrackets*{(i')^2+(j')^2}\abs{\bm{v}}^2
  \end{align*}
  for all $(i',j') \in \mathscr{F}_N$.
\end{lemma}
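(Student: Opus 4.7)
The plan is to absorb the trigonometric factors into two shorthand variables and then reduce the problem to a $2\times 2$ eigenvalue estimate. I would set $a \coloneqq \sin(\pi i' \epsilon)/\epsilon$ and $b \coloneqq \sin(\pi j' \epsilon)/\epsilon$. Since $(i',j') \in \mathscr{F}_N$ forces $\abs{\pi i' \epsilon}, \abs{\pi j' \epsilon} \leq \pi/2$, the elementary bound $\abs{\sin x} \geq (2/\pi)\abs{x}$ on $[-\pi/2, \pi/2]$ gives $a^2 \geq 4(i')^2$ and $b^2 \geq 4(j')^2$, so it suffices to prove the $\epsilon$-free inequality
\[
a^2 \abs{\bm{m}_1 \cdot \bm{v}}^2 + b^2 \abs{\bm{m}_2 \cdot \bm{v}}^2 + \abs{(a\bm{m}_1^\perp - b\bm{m}_2^\perp) \cdot \bm{v}}^2 \gtrsim_{\bm{m}_1,\bm{m}_2} (a^2 + b^2)\abs{\bm{v}}^2
\]
for all $a, b \in \Real$ and $\bm{v} \in \Real^2$.

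Next, I would adopt the orthonormal basis $\{\bm{m}_1, \bm{m}_1^\perp\}$ of $\Real^2$ and write $\bm{m}_2 = \cos\phi\,\bm{m}_1 + \sin\phi\,\bm{m}_1^\perp$, with $\sin\phi \neq 0$ because $\bm{m}_1$ is not parallel to $\bm{m}_2$. Expanding $\bm{v} = x\,\bm{m}_1 + y\,\bm{m}_1^\perp$ and collecting like terms, the left-hand side becomes the quadratic form $\bm{v}^\intercal M \bm{v}$ with symmetric matrix
\[
M = \begin{bmatrix} a^2 + b^2 & ab\sin\phi \\ ab\sin\phi & a^2 + b^2 - 2ab\cos\phi \end{bmatrix},
\]
so the task reduces to showing $\lambda_{\min}(M) \gtrsim_{\bm{m}_1,\bm{m}_2} a^2 + b^2$.

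The key step will be a determinant estimate. Writing $s \coloneqq a^2 + b^2$ and $p \coloneqq ab$, a short calculation gives
\[
\det M = (s - p\cos\phi)^2 - p^2 = \bigl(s - p(1+\cos\phi)\bigr)\bigl(s + p(1-\cos\phi)\bigr).
\]
I would split on the sign of $p$ and use the AM--GM bound $\abs{p} \leq s/2$ to show that each factor is bounded below by a positive multiple of $s$ with margin proportional to $1 - \abs{\cos\phi}$, producing $\det M \gtrsim_{\bm{m}_1, \bm{m}_2} s^2$. Combined with the trivial bound $\mathrm{tr}\,M \leq 2s + 2\abs{p} \leq 3s$, the standard inequality $\lambda_{\min}(M) \geq \det M / \mathrm{tr}\,M$ then gives $\lambda_{\min}(M) \gtrsim_{\bm{m}_1, \bm{m}_2} s$, which closes the argument.

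The main obstacle I anticipate is the positive-definiteness of $M$: its $(2,2)$ entry carries the negative cross-term $-2ab\cos\phi$, so coercivity is not visible by inspection. The factorization of $\det M$ above makes the matter transparent---it reduces coercivity to the quantitative non-parallelism $\abs{\cos\phi} < 1$ of $\bm{m}_1$ and $\bm{m}_2$---but recognizing the right factorization is the one place where the argument requires a little care. Everything else is elementary trigonometry and linear algebra on $\Real^2$.
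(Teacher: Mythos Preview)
Your proposal is correct and takes a genuinely cleaner route than the paper's own proof. Both arguments pass to a $2\times 2$ matrix representation of the quadratic form and use the bound $\lambda_{\min}\geq \det/\tr$, but the paper works in the oblique basis $\{\bm{m}_2^\perp,\bm{m}_1^\perp\}$, obtains a matrix whose determinant is $I^2J^2+K^2J^2+L^2I^2$ with $I,J,K,L$ still involving $\sin(\pi i'\epsilon)/\epsilon$ and $\sin(\pi j'\epsilon)/\epsilon$, and then controls the numerator by a four-case analysis on the relative sizes and signs of $i'$ and $j'$. Your two choices---first absorbing the trigonometry via $\abs{\sin x}\geq(2/\pi)\abs{x}$ on $[-\pi/2,\pi/2]$ so the problem becomes $\epsilon$-free in the variables $a,b$, and second using the orthonormal basis $\{\bm{m}_1,\bm{m}_1^\perp\}$---produce a matrix whose determinant factors exactly as $(s-p(1+\cos\phi))(s+p(1-\cos\phi))$, and the AM--GM bound $\abs{p}\leq s/2$ together with $\abs{\cos\phi}<1$ then gives $\det M\geq\tfrac12(1-\abs{\cos\phi})\,s^2$ with no case splitting at all. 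The paper's approach has the minor advantage of keeping the dependence on $i',j'$ explicit throughout, but your factorization isolates precisely how the non-parallelism hypothesis enters and dispenses with the case analysis entirely.
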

\begin{proof}
  Writing $\bm{v}=z_1\bm{m}_2^\perp+z_2\bm{m}_1^\perp$, we can see that $\abs{\bm{v}}^2\approx_{\bm{m}_1,\bm{m}_2} z_1^2+z_2^2$.
  Therefore, the quadratic form can be rewritten in terms of $z_1$ and $z_2$, and we can derive the matrix representation as
  \[
    G=\begin{bmatrix}
      I^2+K^2 & KL        \\
      KL      & J^2 + L^2
    \end{bmatrix}
  \]
  where
  \begin{gather*}
    I^2 = \frac{\sin^2(\pi i' \epsilon)}{\epsilon^2}\abs{\bm{m}_1 \cdot \bm{m}_2^\perp}^2 \approx_{\bm{m}_1, \bm{m}_2} (i')^2, \\
    J^2 = \frac{\sin^2(\pi j' \epsilon)}{\epsilon^2}\abs{\bm{m}_2 \cdot \bm{m}_1^\perp}^2 \approx_{\bm{m}_1, \bm{m}_2} (j')^2, \\
    K = \frac{\sin(\pi i' \epsilon)}{\epsilon} \bm{m}_1^\perp \cdot \bm{m}_2^\perp-\frac{\sin(\pi j'\epsilon)}{\epsilon}, \quad L = \frac{\sin(\pi i' \epsilon)}{\epsilon}-\frac{\sin(\pi j' \epsilon)}{\epsilon}\bm{m}_2^\perp \cdot \bm{m}_1^\perp.
  \end{gather*}
  It is straightforward to verify that
  \[
    \lambda_{\mathup{min}}(G) \approx \frac{\det G}{\tr G} = \frac{I^2J^2+K^2J^2+L^2I^2}{I^2+J^2+K^2+L^2}.
  \]
  We can observe that $K^2 \lesssim_{\bm{m}_1,\bm{m}_2} (i')^2+(j')^2$ and $L^2 \lesssim_{\bm{m}_1,\bm{m}_2} (i')^2+(j')^2$.
  Therefore, we can derive an estimate on the denominator $I^2+J^2+K^2+L^2\approx_{\bm{m}_1,\bm{m}_2} (i')^2+(j')^2$.
  The numerator part requires a case-by-case analysis.
  Due to the symmetry, we only need to consider the case where $j'\geq 0$.
  \begin{itemize}
    \item ($0\leq j' \leq t_1 i'$) In this case, we can find a small enough positive $t_1$ such that there exists a positive $t_1'$ with
          \[
            L^2 \geq t_1' \frac{\sin^2(\pi i' \epsilon)}{\epsilon^2} \approx_{\bm{m}_1,\bm{m}_2} (i')^2.
          \]
          Therefore, we can derive that $L^2I^2\gtrsim_{\bm{m}_1,\bm{m}_2} (i')^4$, which leads to
          \[
            \frac{\lambda_{\mathup{min}}(G)}{(i')^2+(j')^2} \gtrsim_{\bm{m}_1,\bm{m}_2} 1.
          \]
    \item ($0 \leq i' \leq t_2 j'$) This case is similar to the previous one. We can find $t_2$ and $t_2'$ such that
          \[
            K^2 \geq t_2' \frac{\sin^2(\pi j' \epsilon)}{\epsilon^2} \approx_{\bm{m}_1,\bm{m}_2} (j')^2.
          \]
          Again, it holds that
          \[
            \frac{\lambda_{\mathup{min}}(G)}{(i')^2+(j')^2} \gtrsim_{\bm{m}_1,\bm{m}_2} 1.
          \]
    \item ($1/t_1 \leq i' \leq t_2 j'$) We can now directly control $I^2J^2$ as $(i')^4$ without touching $K^2J^2$ and $L^2I^2$, which leads to the desired estimate.
    \item ($i' \leq 0$) Techniques for this case are similar to the previous ones, and we hence omit the details.
  \end{itemize}

  Combining all results, we complete the proof for the estimate.
\end{proof}
\begin{lemma}\label{lem: quadratic form estimate}
  For arbitrary 2D vectors $\bm{m}_1$ and $\bm{m}_2$ satisfying $\abs{\bm{m}_1}=\abs{\bm{m}_2}=1$ and $\bm{m}_1$ not parallel to $\bm{m}_2$, it holds that for all $\bm{v} \in \Real^2$
  \begin{align*}
     & \quad \frac{\sin^2(\pi i' \epsilon)}{\epsilon^2} \abs{\bm{m}_1 \cdot \bm{v}}^2 + \frac{\sin^2(\pi j' \epsilon)}{\epsilon^2} \abs{\bm{m}_2 \cdot \bm{v}}^2                                                                                              \\
     & \qquad + \RoundBrackets*{\sin^2(\pi i' \epsilon)+\sin^2(\pi j' \epsilon)}\RoundBrackets*{\frac{\sin^2(\pi i' \epsilon)}{\epsilon^2} \abs{\bm{m}_1^\perp \cdot \bm{v}}^2+ \frac{\sin^2(\pi j' \epsilon)}{\epsilon^2}\abs{\bm{m}_2^\perp\cdot \bm{v}}^2} \\
     & \qquad + \abs{\RoundBrackets*{\cos(\pi j' \epsilon)\frac{\sin(\pi i' \epsilon)}{\epsilon}\bm{m}_1^\perp- \cos(\pi i' \epsilon)\frac{\sin(\pi j' \epsilon)}{\epsilon}\bm{m}_2^\perp}\cdot \bm{v}}^2                                                     \\
     & \gtrsim_{\bm{m}_1, \bm{m}_2} \RoundBrackets*{(i')^2+(j')^2}\abs{\bm{v}}^2
  \end{align*}
  for all $(i',j')\in \mathscr{F}_N$.
\end{lemma}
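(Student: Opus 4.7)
The plan is to deduce the stated inequality from \cref{lem: coercive 1} by bounding the left-hand side $Q(\bm{v})$ below by a constant multiple of the quadratic form
\[
  R(\bm{v}) \coloneqq \frac{\sin^2(\pi i'\epsilon)}{\epsilon^2}\abs{\bm{m}_1 \cdot \bm{v}}^2 + \frac{\sin^2(\pi j'\epsilon)}{\epsilon^2}\abs{\bm{m}_2 \cdot \bm{v}}^2 + \abs{\RoundBrackets*{\tfrac{\sin(\pi i'\epsilon)}{\epsilon}\bm{m}_1^\perp - \tfrac{\sin(\pi j'\epsilon)}{\epsilon}\bm{m}_2^\perp}\cdot \bm{v}}^2
\]
appearing in \cref{lem: coercive 1}. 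Since the first two summands of $R$ coincide exactly with the first line of $Q$, the essential task is to recover the uncosined cross term in $R$ from the cosined cross term in the third line of $Q$, using the middle line of $Q$ to absorb the resulting defect.

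Write $s_k \coloneqq \sin(\pi k'\epsilon)/\epsilon$ and $c_k \coloneqq \cos(\pi k'\epsilon)$ for $k' \in \{i', j'\}$, and decompose
\[
  s_1 \bm{m}_1^\perp \cdot \bm{v} - s_2 \bm{m}_2^\perp \cdot \bm{v} = \RoundBrackets*{c_2 s_1 \bm{m}_1^\perp \cdot \bm{v} - c_1 s_2 \bm{m}_2^\perp \cdot \bm{v}} + \RoundBrackets*{(1-c_2) s_1 \bm{m}_1^\perp \cdot \bm{v} - (1-c_1) s_2 \bm{m}_2^\perp \cdot \bm{v}}.
\]
Applying $\abs{a+b}^2 \leq 2\abs{a}^2 + 2\abs{b}^2$ once to this splitting and once more to the second piece yields
\[
  \abs{(s_1\bm{m}_1^\perp - s_2\bm{m}_2^\perp)\cdot \bm{v}}^2 \leq 2\abs{(c_2 s_1 \bm{m}_1^\perp - c_1 s_2 \bm{m}_2^\perp)\cdot \bm{v}}^2 + 4(1-c_2)^2 s_1^2 \abs{\bm{m}_1^\perp \cdot \bm{v}}^2 + 4(1-c_1)^2 s_2^2 \abs{\bm{m}_2^\perp \cdot \bm{v}}^2.
\]
The key observation is the elementary bound $(1-c_k)^2 \leq 1-c_k \leq 1-c_k^2 = \sin^2(\pi k'\epsilon)$, valid because $c_k \in [0,1]$ on the range $\abs{k'\epsilon}\leq 1/2$. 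Each error term on the right-hand side is therefore majorized by the middle line of $Q$, since for instance
\[
  4(1-c_2)^2 s_1^2 \abs{\bm{m}_1^\perp \cdot \bm{v}}^2 \leq 4\sin^2(\pi j'\epsilon) s_1^2 \abs{\bm{m}_1^\perp \cdot \bm{v}}^2 \leq 4\RoundBrackets*{\sin^2(\pi i'\epsilon) + \sin^2(\pi j'\epsilon)} s_1^2 \abs{\bm{m}_1^\perp \cdot \bm{v}}^2,
\]
and analogously for the companion term. Combining these bounds gives $R(\bm{v}) \leq 4\,Q(\bm{v})$ with a universal constant, and an appeal to \cref{lem: coercive 1} then delivers the desired estimate $Q(\bm{v}) \gtrsim_{\bm{m}_1,\bm{m}_2} ((i')^2 + (j')^2)\abs{\bm{v}}^2$.

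The main obstacle is recognizing why the middle line of $Q$ takes its particular form: it is placed there precisely to soak up the defect created by the cosine factors in the third line, via the Pythagorean identity $\sin^2(\pi k'\epsilon) + \cos^2(\pi k'\epsilon) = 1$ together with the range constraint $\abs{k'\epsilon}\leq 1/2$ that forces $c_k \geq 0$. Once this structural observation is made, the proof reduces to the brief algebraic manipulation above followed by an invocation of the already established \cref{lem: coercive 1}; no fresh case analysis on $i'$ and $j'$ is required beyond what is internal to that lemma.
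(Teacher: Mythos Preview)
Your argument is correct, and it is cleaner than the paper's. The paper proves the lemma by a threshold split on the quantity $\sin^2(\pi i'\epsilon)+\sin^2(\pi j'\epsilon)$: when this is at least some $\delta$, the first line together with the middle line of $Q$ already gives full coercivity without using the third line at all; when it is below $\delta$, the cosines are close to $1$, the third line is compared to the uncosined cross term with a defect of size $\delta^2\bigl((i')^2+(j')^2\bigr)\abs{\bm{v}}^2$, and then \cref{lem: coercive 1} closes the argument after $\delta$ is chosen small. Your route bypasses this dichotomy entirely: the single inequality $(1-c_k)^2\le 1-c_k^2=\sin^2(\pi k'\epsilon)$, valid because $c_k\in[0,1]$ on $\mathscr{F}_N$, lets the middle line absorb the cosine defect uniformly in $(i',j')$, yielding $R(\bm{v})\le 4\,Q(\bm{v})$ directly. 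The gain is that no parameter tuning or case analysis is needed and the constant is explicit; the paper's argument, on the other hand, makes more transparent why either the middle line or the third line alone suffices depending on the frequency regime.
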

\begin{proof}
  We first set a positive $\delta$ which will be determined later.

  Suppose that $\sin^2(\pi i'\epsilon)+\sin^2(\pi j' \epsilon) \geq \delta$, we can obtain that
  \begin{align*}
     & \quad \frac{\sin^2(\pi i' \epsilon)}{\epsilon^2} \abs{\bm{m}_1 \cdot \bm{v}}^2 + \frac{\sin^2(\pi j' \epsilon)}{\epsilon^2} \abs{\bm{m}_2 \cdot \bm{v}}^2                                     \\
     & \qquad + \delta \RoundBrackets*{\frac{\sin^2(\pi i' \epsilon)}{\epsilon^2} \abs{\bm{m}_1^\perp \cdot \bm{v}}^2+ \frac{\sin^2(\pi j' \epsilon)}{\epsilon^2}\abs{\bm{m}_2^\perp\cdot \bm{v}}^2} \\
     & \gtrsim_{\bm{m}_1, \bm{m}_2} \min\CurlyBrackets{1,\delta} \RoundBrackets*{(i')^2+(j')^2}\abs{\bm{v}}^2,
  \end{align*}
  where we implicitly utilize the fact that $\abs{\bm{m}_1 \cdot \bm{v}}^2+\abs{\bm{m}_1^\perp \cdot \bm{v}}^2 \approx_{\bm{m}_1} \abs{\bm{v}}^2$ and $\abs{\bm{m}_2 \cdot \bm{v}}^2+\abs{\bm{m}_2^\perp \cdot \bm{v}}^2 \approx_{\bm{m}_2} \abs{\bm{v}}^2$.
  The target estimate holds in this case.

  If $\sin^2(\pi i'\epsilon)+\sin^2(\pi j' \epsilon) \leq \delta$, we have $\abs{1-\cos(\pi i' \epsilon)}\lesssim \delta$ and $\abs{1-\cos(\pi j' \epsilon)}\lesssim \delta$.
  We can then show that
  \begin{align*}
     & \quad \abs{\RoundBrackets*{\cos(\pi j' \epsilon)\frac{\sin(\pi i' \epsilon)}{\epsilon}\bm{m}_1^\perp- \cos(\pi i' \epsilon)\frac{\sin(\pi j' \epsilon)}{\epsilon}\bm{m}_2^\perp}\cdot \bm{v}}^2                                       \\
     & \gtrsim \abs{\RoundBrackets*{\frac{\sin(\pi i' \epsilon)}{\epsilon}\bm{m}_1^\perp- \frac{\sin(\pi j' \epsilon)}{\epsilon}\bm{m}_2^\perp}\cdot \bm{v}}^2                                                                               \\
     & \quad - \abs{\cos(\pi j' \epsilon)-1}^2\frac{\sin^2(\pi i' \epsilon)}{\epsilon^2} \abs{\bm{m}_1^\perp \cdot \bm{v}}^2 - \abs{\cos(\pi i' \epsilon)-1}^2\frac{\sin^2(\pi j' \epsilon)}{\epsilon^2} \abs{\bm{m}_2^\perp \cdot \bm{v}}^2 \\
     & \gtrsim_{\bm{m}_1,\bm{m}_2} \abs{\RoundBrackets*{\frac{\sin(\pi i' \epsilon)}{\epsilon}\bm{m}_1^\perp- \frac{\sin(\pi j' \epsilon)}{\epsilon}\bm{m}_2^\perp}\cdot \bm{v}}^2 - \delta^2 \RoundBrackets*{(i')^2+(j')^2}\abs{\bm{v}}^2.
  \end{align*}
  Combining \cref{lem: coercive 1}, we can choose $\delta$ small enough such that the desired estimate holds.
\end{proof}
\begin{proposition}\label{prop: coercive D}
  It holds that
  \[
    \lambda_{\mathup{min}}(B_{\SSSText{D}}[i',j']) \gtrsim_{\rho^\star} (i')^2+(j')^2
  \]
  for all $(i', j')\in \mathscr{F}_N$
\end{proposition}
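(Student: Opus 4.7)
The plan is to express the real quadratic form $\bm{v}\cdot B_{\SSSText{D}}[i',j']\bm{v}$ as a sum of manifestly non-negative terms, and then to match the resulting lower bound to the left-hand side of \Cref{lem: quadratic form estimate} applied with $\bm{m}_1=\bm{l}_1$ and $\bm{m}_2=\bm{l}_3$, which are unit vectors that are not parallel. Working with $\bm{v}\in\Real^2$, I would start from the explicit formula for $\bm{v}\cdot B_{\SSSText{D}}\bm{v}$ given just before \Cref{lem: strange id}, treating the axial contribution proportional to $\rho^\star$ separately from the transverse contribution, which is the only piece that carries a negative sign (the Schur correction involving $-24^2/c_{\SSSText{D}}$).

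The first key step is to apply \Cref{lem: strange id} with $n=3$, taking $\bm{w}_t=\frac{\sin(\pi\xi_t\epsilon)}{\epsilon}\bm{l}_t^\perp$ and $\theta_t=\pi\xi_t\epsilon$, where $\xi_1=i'$, $\xi_2=i'+j'$, $\xi_3=j'$. Since $c=\sum_t(12-8\sin^2\theta_t)$ coincides with $c_{\SSSText{D}}$ and obeys $12\le c_{\SSSText{D}}\le 36$, the identity rewrites the full transverse-plus-Schur part as a strictly non-negative combination with bounded coefficients. Concretely, after this step I would have the lower bound
\[
\bm{v}\cdot B_{\SSSText{D}}\bm{v}\gtrsim \rho^\star\sum_{t=1}^3\frac{\sin^2(\pi\xi_t\epsilon)}{\epsilon^2}\abs{\bm{l}_t\cdot\bm{v}}^2
+\Bigl(\sum_{t'=1}^3\sin^2\theta_{t'}\Bigr)\sum_{t=1}^3\frac{\sin^2(\pi\xi_t\epsilon)}{\epsilon^2}\abs{\bm{l}_t^\perp\cdot\bm{v}}^2
+\sum_{t\neq t'}\abs{(\cos\theta_{t'}\bm{w}_t-\cos\theta_t\bm{w}_{t'})\cdot\bm{v}}^2,
\]
with universal constants.

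Next I would drop every term involving the middle direction $t=2$: keep only the $t\in\{1,3\}$ contributions in the axial and transverse sums, use $\sum_{t'}\sin^2\theta_{t'}\ge\sin^2(\pi i'\epsilon)+\sin^2(\pi j'\epsilon)$, and retain only the $(t,t')=(1,3)$ and $(3,1)$ cross terms (which are identical). What remains is precisely the left-hand side of \Cref{lem: quadratic form estimate} with $\bm{m}_1=\bm{l}_1$ and $\bm{m}_2=\bm{l}_3$, except that the axial contribution is scaled by $\rho^\star$. Factoring out $\min(\rho^\star,1)$ reduces the expression to an honest application of \Cref{lem: quadratic form estimate}, yielding
\[
\bm{v}\cdot B_{\SSSText{D}}[i',j']\bm{v}\gtrsim_{\rho^\star}\bigl((i')^2+(j')^2\bigr)\abs{\bm{v}}^2.
\]

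The main obstacle is the bookkeeping in the first two steps: one has to verify that the trigonometric prefactors $48$ and $24^2/c_{\SSSText{D}}$ in the quadratic form of $B_{\SSSText{D}}$ line up correctly with the combinatorial normalization in \Cref{lem: strange id} (they do, since $576/48=12$), and that dropping the $t=2$ beam still leaves enough material to invoke \Cref{lem: quadratic form estimate} with the non-parallel pair $(\bm{l}_1,\bm{l}_3)$. Once this alignment is checked, the minimax characterization of $\lambda_{\mathup{min}}$ over $\bm{v}\in\Real^2$ gives the advertised estimate uniformly in $(i',j')\in\mathscr{F}_N$.
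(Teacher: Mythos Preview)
Your proposal is correct and follows essentially the same route as the paper: apply \Cref{lem: strange id} with $n=3$ and $\bm{w}_t=\frac{\sin(\pi\xi_t\epsilon)}{\epsilon}\bm{l}_t^\perp$ to rewrite the transverse-plus-Schur contribution as a non-negative sum, discard the $t=2$ beam, and invoke \Cref{lem: quadratic form estimate} with $\bm{m}_1=\bm{l}_1$, $\bm{m}_2=\bm{l}_3$; your bookkeeping on the $48$ vs.\ $24^2/c_{\SSSText{D}}$ normalization and the $\min(\rho^\star,1)$ factor is sound. The paper omits the details but points to exactly this chain of lemmas.
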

The proof of \cref{prop: coercive D} is straightforward by following \cref{lem: coercive 1,lem: quadratic form estimate}, and we hence omit the details here.
\Cref{prop: coercive C} serves as a counterpart to \cref{prop: coercive D}, which shows that $B_{\SSSText{C}}$ exhibits the same form of coercivity as $B_{\SSSText{C}}$.
\begin{proposition}\label{prop: coercive C}
  It holds that
  \[
    \lambda_{\mathup{min}}(B_{\SSSText{C}}[i',j']) \gtrsim_{\rho^\star} (i')^2+(j')^2
  \]
  for all $(i',j')\in \mathscr{F}_N$.
\end{proposition}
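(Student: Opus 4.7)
The plan is to mirror the proof of \cref{prop: coercive D}, exploiting that in the continuum setting the discrete trigonometric weights $\sin(\pi k'\epsilon)/\epsilon$ and $\cos(\pi k'\epsilon)$ degenerate to their $\epsilon\to 0$ limits $\pi k'$ and $1$, respectively. This replacement preserves the algebraic structure of the earlier arguments while cleaning up the formulas.

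First, I would invoke \cref{lem: strange id} with $n=3$, with $\theta_t = 0$ for each $t$, and with $\bm{w}_t = \pi k'_t \bm{l}_t^\perp$, where $k'_1 = i'$, $k'_2 = i' + j'$, and $k'_3 = j'$. Since $c = 36 = c_{\SSSText{C}}$ and every $\cos\theta_t = 1$, the ``bending minus coupling'' portion of the quadratic form $\bm{v}\cdot B_{\SSSText{C}}[i',j']\bm{v}$ collapses to a manifestly non-negative sum of squares, giving
\[
  \bm{v}\cdot B_{\SSSText{C}}[i',j']\bm{v} = 4\rho^\star\pi^2\sum_{t=1}^3 (k'_t)^2\abs{\bm{l}_t\cdot\bm{v}}^2 + 8\pi^2\sum_{t=1}^{3}\sum_{t'}^{t'\neq t} \abs{(k'_t\bm{l}_t^\perp - k'_{t'}\bm{l}_{t'}^\perp)\cdot\bm{v}}^2.
\]
This already establishes positive semi-definiteness.

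Second, for each pair $(t,t')$ with $t\neq t'$, I would apply the continuum counterpart of \cref{lem: coercive 1} with $\bm{m}_1 = \bm{l}_t$ and $\bm{m}_2 = \bm{l}_{t'}$, which reads
\[
  (k'_t)^2\abs{\bm{l}_t\cdot\bm{v}}^2 + (k'_{t'})^2\abs{\bm{l}_{t'}\cdot\bm{v}}^2 + \abs{(k'_t\bm{l}_t^\perp - k'_{t'}\bm{l}_{t'}^\perp)\cdot\bm{v}}^2 \gtrsim_{\bm{l}_t,\bm{l}_{t'}} \left((k'_t)^2 + (k'_{t'})^2\right)\abs{\bm{v}}^2.
\]
This analog is obtained by repeating the $2\times 2$ matrix argument from \cref{lem: coercive 1} with $\sin(\pi k'\epsilon)/\epsilon$ replaced by $\pi k'$; the case analysis on the relative magnitudes of $k'_t$ and $k'_{t'}$ transports verbatim and in fact becomes cleaner because the equivalences are now exact. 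Summing the three pair inequalities and using the sandwich $(i')^2 + (j')^2 \leq (k'_1)^2 + (k'_2)^2 + (k'_3)^2 \leq 3\left((i')^2 + (j')^2\right)$ together with the representation above delivers $\bm{v}\cdot B_{\SSSText{C}}[i',j']\bm{v} \gtrsim_{\rho^\star} \left((i')^2 + (j')^2\right)\abs{\bm{v}}^2$, which is the claimed eigenvalue lower bound.

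The main obstacle is merely verifying the continuum version of \cref{lem: coercive 1}; however, this is strictly simpler than the discrete proof, since no trigonometric tail terms survive and consequently \cref{lem: quadratic form estimate}---whose role in the discrete argument was to control the regime where $\sin(\pi k'\epsilon)/\epsilon$ fails to track the full $\abs{k'}$ growth---is not needed here. The continuum coercivity therefore follows from a direct specialization of the discrete framework.
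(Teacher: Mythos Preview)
Your proposal is correct and follows essentially the same route as the paper: both establish the continuum analogue of \cref{lem: coercive 1} (with $\sin(\pi k'\epsilon)/\epsilon$ replaced by $\pi k'$) and apply it after the \cref{lem: strange id} decomposition, correctly noting that \cref{lem: quadratic form estimate} is superfluous here since all $\theta_t=0$. The only cosmetic difference is that the paper invokes the pair inequality once with $\bm{m}_1=\bm{l}_1$, $\bm{m}_2=\bm{l}_3$ (where $(k'_1)^2+(k'_3)^2=(i')^2+(j')^2$ already), whereas you sum over all three pairs and then use the sandwich $(i')^2+(j')^2\leq\sum_t(k'_t)^2$; both routes are valid.
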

\begin{proof}
  We can mimic the proof of \cref{lem: coercive 1} to prove the following statement:
  For arbitrary 2D vectors $\bm{m}_1$ and $\bm{m}_2$ satisfying $\abs{\bm{m}_1}=\abs{\bm{m}_2}=1$ and $\bm{m}_1$ not parallel to $\bm{m}_2$, it holds that for all $\bm{v} \in \Real^2$
  \[
    (i')^2 \abs{\bm{m}_1 \cdot \bm{v}}^2 + (j')^2 \abs{\bm{m}_2 \cdot \bm{v}}^2 + \abs{\RoundBrackets*{i'\bm{m}_1^\perp- j'\bm{m}_2^\perp}\cdot \bm{v}}^2  \gtrsim_{\bm{m}_1, \bm{m}_2} \RoundBrackets*{(i')^2+(j')^2}\abs{\bm{v}}^2
  \]
  for any $(i',j')$.
  We can then complete the proof by taking $\bm{m}_1=\bm{l}_1$ and $\bm{m}_2 = \bm{l}_3$.
\end{proof}
Our strategy to estimate $B_{\SSSText{D}}^{-1}-B_{\SSSText{C}}^{-1}$ now becomes clear.
By utilizing the basic inequality of matrix norms, we have
\begin{equation}\label{eq: basic inequality}
  \norm{B_{\SSSText{D}}^{-1}-B_{\SSSText{C}}^{-1}} \leq \norm{B_{\SSSText{D}}-B_{\SSSText{C}}} \norm{B_{\SSSText{D}}^{-1}} \norm{B_{\SSSText{C}}^{-1}} \lesssim_{\rho^\star} \frac{\norm{B_{\SSSText{D}}-B_{\SSSText{C}}}}{\RoundBrackets*{(i')^2+(j')^2}^2}.
\end{equation}
Then, our target reduces to estimating $\norm{B_{\SSSText{D}}-B_{\SSSText{C}}}$.
Recalling that all norms on $2$-by-$2$ matrices are equivalent, we hence only need to estimate the entry-wise difference of $B_{\SSSText{D}}$ and $B_{\SSSText{C}}$.
The following inequality offers a basic tool to achieve this goal.
\begin{lemma}\label{lem: sin cos}
  For any non-negative integers $m$, $n$, $m^\circ$ and $n^\circ$, it holds that
  \begin{align*}
     & \quad \abs{\frac{\sin^m(\pi i' \epsilon) \sin^n(\pi j' \epsilon)}{\epsilon^{m+n}}\cos^{m^\circ}(\pi i' \epsilon)\cos^{n^\circ}(\pi j' \epsilon)-(\pi i')^m(\pi j')^n} \\
     & \lesssim_{m, n, m^\circ, n^\circ}
    \begin{cases}
      \epsilon^2(j')^{n+2},                                              & \text{ if } m=m^\circ=0, \text{ and } n \geq 1, \\
      \epsilon^2(i')^{m+2},                                              & \text{ if } m \geq 1, \text{ and } n=n^\circ=0, \\
      \epsilon^2\SquareBrackets*{(i')^{m+2}(j')^{n}+(i')^{m}(j')^{n+2}}, & \text{ else},                                   \\
    \end{cases}
  \end{align*}
  for any $i'$ and $j'$ satisfying $i'\geq 0$, $j'\geq 0$, and $i'+j'\geq 1$.
\end{lemma}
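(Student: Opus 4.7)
The plan is a routine Taylor-plus-telescoping estimate, made tractable by the observation that $|\pi i'\epsilon|, |\pi j'\epsilon| \le \pi/2$ on $\mathscr{F}_N$, so every argument of $\sin$ and $\cos$ stays in a fixed compact set and one gets clean remainder bounds with no issues at the edge of the frequency domain.

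First I would record the two one-variable building blocks from Taylor's theorem, valid for every integer $k$ with $|k| \le N/2$:
\[
  \left| \frac{\sin(\pi k\epsilon)}{\epsilon} - \pi k \right| \lesssim \epsilon^2 |k|^3, \qquad \left| \cos(\pi k\epsilon) - 1 \right| \lesssim \epsilon^2 k^2,
\]
together with the trivial pointwise bounds $|\sin(\pi k\epsilon)|/\epsilon \le \pi |k|$ and $|\cos(\pi k\epsilon)| \le 1$. Then I would introduce the shorthand $A = \sin(\pi i'\epsilon)/\epsilon$, $a = \pi i'$, analogously $B, b$ for $j'$, together with $C = \cos(\pi i'\epsilon)$, $D = \cos(\pi j'\epsilon)$, and telescope the product difference as
\begin{align*}
  A^m B^n C^{m^\circ} D^{n^\circ} - a^m b^n
    &= (A^m - a^m)\, B^n C^{m^\circ} D^{n^\circ} + a^m (B^n - b^n)\, C^{m^\circ} D^{n^\circ} \\
    &\quad + a^m b^n (C^{m^\circ} - 1)\, D^{n^\circ} + a^m b^n (D^{n^\circ} - 1).
\end{align*}

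Next I would apply the geometric-series identity $X^k - Y^k = (X-Y)\sum_{\ell=0}^{k-1} X^\ell Y^{k-1-\ell}$ to each of the four differences on the right-hand side. Combining the two Taylor estimates with the trivial bounds $|A| \lesssim |i'|$, $|B| \lesssim |j'|$, $|C|, |D| \le 1$, each of the four contributions gets controlled (up to a constant depending only on $m, n, m^\circ, n^\circ$) by $\epsilon^2 (i')^{m+2}(j')^n$, $\epsilon^2 (i')^m (j')^{n+2}$, $\epsilon^2 (i')^{m+2}(j')^n$, and $\epsilon^2 (i')^m (j')^{n+2}$ respectively. Summing yields the \emph{else} case of the lemma.

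The two boundary cases become immediate once one checks which of the four telescoping terms survive. If $m = m^\circ = 0$, the first term is absent and the third vanishes identically because $C^{m^\circ} - 1 = 0$; only the second and fourth terms remain, and each contributes exactly $\epsilon^2 (j')^{n+2}$, matching the stated sharper bound. The case $n = n^\circ = 0$ is the mirror image. I do not anticipate a real obstacle — this is essentially bookkeeping once the two Taylor remainders are in hand. The only item worth a moment of attention is verifying that $|C^{m^\circ} - 1| \lesssim \epsilon^2 (i')^2$ does not pick up a spurious $m^\circ$-dependent factor of $(i')$, which is immediate from $|C| \le 1$ bounding the corresponding geometric sum by $m^\circ |C - 1|$.
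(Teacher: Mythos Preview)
Your proposal is correct and follows essentially the same approach as the paper: telescope the four-factor product into four differences (one per factor), then control each using the elementary Taylor remainders $|\sin x - x|\lesssim |x|^3$ and $|1-\cos x|\lesssim x^2$ together with the trivial bounds $|\sin x|\le |x|$, $|\cos x|\le 1$. The paper in fact only writes out the ``else'' case and omits the two boundary cases, so your explicit check that the relevant telescoping terms vanish when $m=m^\circ=0$ or $n=n^\circ=0$ is a welcome addition rather than a deviation.
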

\begin{proof}
  We here only consider the last case.
  Split the target term into four parts as
  \begin{align*}
     & \quad \abs{\frac{\sin^m(\pi i' \epsilon) \sin^n(\pi j' \epsilon)}{\epsilon^{m+n}}\cos^{m^\circ}(\pi i' \epsilon)\cos^{n^\circ}(\pi j' \epsilon)-(\pi i')^m(\pi j')^n}                       \\
     & \leq \abs{\RoundBrackets*{\frac{\sin^m(\pi i' \epsilon)}{\epsilon^m} - (\pi i')^m}\frac{\sin^n(\pi j' \epsilon)}{\epsilon^n}\cos^{m^\circ}(\pi i' \epsilon)\cos^{n^\circ}(\pi j' \epsilon)} \\
     & \quad + \abs{(\pi i')^m\RoundBrackets*{\frac{\sin^n(\pi j' \epsilon)}{\epsilon^n} - (\pi j')^n}\cos^{m^\circ}(\pi i' \epsilon)\cos^{n^\circ}(\pi j' \epsilon)}                              \\
     & \quad + \abs{(\pi i')^m (\pi j')^n \RoundBrackets*{\cos^{m^\circ}(\pi i' \epsilon)-1}\cos^{n^\circ}(\pi j' \epsilon)}                                                                       \\
     & \quad + \abs{(\pi i')^m (\pi j')^n \RoundBrackets*{\cos^{n^\circ}(\pi j' \epsilon)-1}}                                                                                                      \\
     & \coloneqq K_1 + K_2 + K_3 + K_4.
  \end{align*}
  It is easy to verify that $\abs{\sin^m(\pi i' \epsilon)/\epsilon^m - (\pi i')^m}\lesssim_m \epsilon^2 (i')^{m+2}$ and $\abs{\sin^n(\pi j' \epsilon)/\epsilon^n} \lesssim (j')^n$.
  Therefore, we can derive that $K_1 \lesssim_m \epsilon^2 (i')^{m+2}(j')^n$.
  Similarly, we can obtain that $K_2 \lesssim_n \epsilon^2(i')^m(j')^{n+2}$.
  For $K_3$ and $K_4$, we can establish the estimates that $\abs{\cos^{m^\circ}(\pi i' \epsilon)-1}\lesssim_{m^\circ} \epsilon^2 (i')^2$ and $\abs{\cos^{n^\circ}(\pi j' \epsilon)-1}\lesssim_{n^\circ} \epsilon^2 (j')^2$.
  We hence complete the proof by summing up all estimates.
\end{proof}
We here present the estimate of $\norm{B_{\SSSText{D}}^{-1}[i',j']-B_{\SSSText{C}}^{-1}[i',j']}$.
\begin{theorem} \label{thm: inverse difference}
  For any $(i',j')\in \mathscr{F}_N^\circ$, it holds that
  \[
    \norm{B_{\SSSText{D}}^{-1}[i',j']-B_{\SSSText{C}}^{-1}[i',j']}\lesssim_{\rho^\star} \epsilon^2.
  \]
\end{theorem}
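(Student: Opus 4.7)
The plan is to invoke the basic inequality \cref{eq: basic inequality}, which reduces the task to proving
\[
\norm{B_{\SSSText{D}}[i',j']-B_{\SSSText{C}}[i',j']}\lesssim \epsilon^2\RoundBrackets*{(i')^2+(j')^2}^2.
\]
Since $B_{\SSSText{D}}$ and $B_{\SSSText{C}}$ are only $2$-by-$2$, every matrix norm is comparable with the entrywise maximum, so it suffices to control each entry of the difference, using the additive split $B_{\star}=A_{\star}+\bm{b}_{\star}\otimes \bm{b}_{\star}/c_{\star}$ together with \cref{lem: sin cos}.

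For $A_{\SSSText{D}}-A_{\SSSText{C}}$, each of the three summands differs from its continuous counterpart by a factor $\sin^2(\pi k\epsilon)/\epsilon^2-\pi^2 k^2$ with $k\in\CurlyBrackets*{i',\,i'+j',\,j'}$; \cref{lem: sin cos} with $(m,n,m^\circ,n^\circ)=(2,0,0,0)$ (and its symmetric variants) yields entrywise control by $\epsilon^2[(i')^4+(i'+j')^4+(j')^4]\lesssim\epsilon^2((i')^2+(j')^2)^2$. For the vector $\bm{b}_{\SSSText{D}}$, the identity $\sin(2\pi k\epsilon)/\epsilon=2\sin(\pi k\epsilon)\cos(\pi k\epsilon)/\epsilon$ and the lemma at $(m,n,m^\circ,n^\circ)=(1,0,1,0)$ give $\abs{\bm{b}_{\SSSText{D}}-\bm{b}_{\SSSText{C}}}\lesssim \epsilon^2((i')^2+(j')^2)^{3/2}$. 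The scalar $c_{\SSSText{C}}-c_{\SSSText{D}}=8[\sin^2(\pi i'\epsilon)+\sin^2(\pi(i'+j')\epsilon)+\sin^2(\pi j'\epsilon)]$ is controlled directly by $\epsilon^2((i')^2+(j')^2)$ via $\abs{\sin x}\le \abs{x}$.

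The only piece needing a little care is the rank-one contribution. I would add and subtract $\bm{b}_{\SSSText{C}}\otimes \bm{b}_{\SSSText{C}}/c_{\SSSText{D}}$ to split
\[
\frac{\bm{b}_{\SSSText{D}}\otimes \bm{b}_{\SSSText{D}}}{c_{\SSSText{D}}}-\frac{\bm{b}_{\SSSText{C}}\otimes \bm{b}_{\SSSText{C}}}{c_{\SSSText{C}}}=\frac{(\bm{b}_{\SSSText{D}}-\bm{b}_{\SSSText{C}})\otimes \bm{b}_{\SSSText{D}}+\bm{b}_{\SSSText{C}}\otimes (\bm{b}_{\SSSText{D}}-\bm{b}_{\SSSText{C}})}{c_{\SSSText{D}}}+\frac{(c_{\SSSText{C}}-c_{\SSSText{D}})\,\bm{b}_{\SSSText{C}}\otimes \bm{b}_{\SSSText{C}}}{c_{\SSSText{D}}c_{\SSSText{C}}}.
\]
The denominators are harmless because $c_{\SSSText{D}}\geq 36-24=12$ (each $\sin^2$ is bounded by $1$), so $1/c_{\SSSText{D}}$ and $1/c_{\SSSText{C}}$ are uniformly bounded. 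Using the uniform estimate $\abs{\bm{b}_{\SSSText{D}}}\lesssim \sqrt{(i')^2+(j')^2}$ (again from $\abs{\sin x}\le \abs{x}$ termwise), together with the previously derived bounds on $\abs{\bm{b}_{\SSSText{D}}-\bm{b}_{\SSSText{C}}}$ and $\abs{c_{\SSSText{C}}-c_{\SSSText{D}}}$, both factors in the splitting contribute at order $\epsilon^2((i')^2+(j')^2)^2$, matching the $A$-part estimate.

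Summing the three pieces gives $\norm{B_{\SSSText{D}}-B_{\SSSText{C}}}\lesssim \epsilon^2((i')^2+(j')^2)^2$, and \cref{eq: basic inequality} delivers the claim. The main hurdle is really bookkeeping: the two factors of $(i')^2+(j')^2$ in the denominator of \cref{eq: basic inequality} come from the sharp coercivity furnished by \cref{prop: coercive D,prop: coercive C}, so each ingredient of $\norm{B_{\SSSText{D}}-B_{\SSSText{C}}}$ must achieve growth no worse than $((i')^2+(j')^2)^2$ in $(i',j')$. Any slack — for instance, bounding $\abs{\bm{b}_{\SSSText{D}}}$ by $1/\epsilon$ instead of $\sqrt{(i')^2+(j')^2}$, or losing a power of $i',j'$ in \cref{lem: sin cos} — would immediately destroy the $\epsilon^2$ rate, so the work is to verify that each estimate is polynomially sharp in $(i',j')$.
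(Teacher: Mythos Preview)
Your argument is correct and follows the paper's overall strategy: use \cref{eq: basic inequality} to reduce to $\norm{B_{\SSSText{D}}-B_{\SSSText{C}}}\lesssim \epsilon^2\RoundBrackets*{(i')^2+(j')^2}^2$, then split $B_\star=A_\star+\bm{b}_\star\otimes\bm{b}_\star/c_\star$ and bound the pieces via \cref{lem: sin cos}. Where you differ is in the bookkeeping for the two ``mixed'' terms. For the $xy$-mode $A_{2,\SSSText{D}}-A_{2,\SSSText{C}}$, the paper first expands $\sin^2(\pi(i'+j')\epsilon)$ through the angle-addition formula and then invokes \cref{lem: sin cos} term by term, whereas you apply the single-variable Taylor estimate with $k=i'+j'$ directly and absorb $(i'+j')^4\lesssim\RoundBrackets*{(i')^2+(j')^2}^2$. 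For the rank-one piece, the paper writes $\bm{b}_{\SSSText{D}}\otimes\bm{b}_{\SSSText{D}}-\bm{b}_{\SSSText{C}}\otimes\bm{b}_{\SSSText{C}}=\sum_{\imath,\jmath}\bm{b}_{\imath,\SSSText{D}}\otimes\bm{b}_{\jmath,\SSSText{D}}-\bm{b}_{\imath,\SSSText{C}}\otimes\bm{b}_{\jmath,\SSSText{C}}$ and then expands each of the nine cross products $\sin(2\pi k\epsilon)\sin(2\pi k'\epsilon)/\epsilon^2$ via addition formulas before appealing to \cref{lem: sin cos}; your telescoping identity $(\bm{b}_{\SSSText{D}}-\bm{b}_{\SSSText{C}})\otimes\bm{b}_{\SSSText{D}}+\bm{b}_{\SSSText{C}}\otimes(\bm{b}_{\SSSText{D}}-\bm{b}_{\SSSText{C}})$ reduces this to a single bound on $\abs{\bm{b}_{\SSSText{D}}-\bm{b}_{\SSSText{C}}}$ paired with the crude $\abs{\bm{b}_{\SSSText{D}}},\abs{\bm{b}_{\SSSText{C}}}\lesssim\sqrt{(i')^2+(j')^2}$. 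Both routes land on the same $\epsilon^2\RoundBrackets*{(i')^2+(j')^2}^2$ estimate; your version is shorter and avoids the case work, while the paper's expansion keeps everything phrased strictly in the $(i',j')$ variables that \cref{lem: sin cos} is stated for.
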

\begin{proof}
  As discussed, we now focus on the entry-wise difference of $B_{\SSSText{D}}$ and $B_{\SSSText{C}}$.
  Recalling the construction of $B_{\SSSText{D}}$ and $B_{\SSSText{C}}$, we can rewrite $B_{\SSSText{D}}-B_{\SSSText{C}}$ as the following expression:
  \begin{align*}
    B_{\SSSText{D}}-B_{\SSSText{C}} & =\RoundBrackets*{A_{\SSSText{1,D}}-A_{\SSSText{1,C}}}+\RoundBrackets*{A_{\SSSText{2,D}}-A_{\SSSText{2,C}}}+\RoundBrackets*{A_{\SSSText{3,D}}-A_{\SSSText{3,C}}}                                                                                                  \\
                                    & \quad + \RoundBrackets*{\bm{b}_{\SSSText{D}}\otimes \bm{b}_{\SSSText{D}}-\bm{b}_{\SSSText{C}}\otimes \bm{b}_{\SSSText{C}}}/c_{\SSSText{D}}+\bm{b}_{\SSSText{C}}\otimes \bm{b}_{\SSSText{C}}\RoundBrackets*{\frac{1}{c_{\SSSText{D}}}-\frac{1}{c_{\SSSText{C}}}}.
  \end{align*}
  Thanks to \cref{lem: sin cos}, for $x$-mode, it is easy to see that
  \[
    \norm{A_{\SSSText{1,D}}-A_{\SSSText{1,C}}} \lesssim_{\rho^\star} \epsilon^2(i')^4\text{ and } \norm{A_{\SSSText{3,D}}-A_{\SSSText{3,C}}} \lesssim_{\rho^\star} \epsilon^2(j')^4.
  \]
  For $y$-mode, before applying \cref{lem: sin cos}, we can first take an expansion as
  \begin{multline*}
    \sin^2(\pi (i'+j')\epsilon)=\sin^2(\pi i' \epsilon)\cos^2(\pi j' \epsilon)+\sin^2(\pi j' \epsilon)\cos^2(\pi i' \epsilon)\\
    +2 \sin(\pi i' \epsilon)\sin(\pi j' \epsilon)\cos(\pi i' \epsilon)\cos(\pi j' \epsilon).
  \end{multline*}
  Then, we can derive that
  \[
    \norm{A_{\SSSText{2,D}}-A_{\SSSText{2,C}}} \lesssim_{\rho^\star} \epsilon^2\RoundBrackets*{(i')^4+(j')^4+\abs{i'}^3\abs{j'}+\abs{i'}\abs{j'}^3} \lesssim_{\rho^\star} \RoundBrackets*{(i')^2+(j')^2}^2.
  \]
  The procedure for handling $\bm{b}_{\SSSText{D}}\otimes \bm{b}_{\SSSText{D}}-\bm{b}_{\SSSText{C}}\otimes \bm{b}_{\SSSText{C}}$ is similar, and we first expand the term as
  \[
    \bm{b}_{\SSSText{D}}\otimes \bm{b}_{\SSSText{D}}-\bm{b}_{\SSSText{C}}\otimes \bm{b}_{\SSSText{C}} = \sum_{\imath \in \CurlyBrackets{1,2,3}} \sum_{\jmath \in \CurlyBrackets{1,2,3}}\bm{b}_{\SSSText{\imath,D}} \otimes \bm{b}_{\SSSText{\jmath, D}}-\bm{b}_{\SSSText{\imath, C}}\otimes \bm{b}_{\SSSText{\jmath, C}}.
  \]
  For the case $\imath=1$ and $\jmath=2$, we can derive that
  \begin{align*}
     & \quad \frac{\sin(2\pi i' \epsilon)\sin(2\pi (i'+j') \epsilon)}{\epsilon^2} =\frac{4}{\epsilon^2}\sin(\pi i' \epsilon)\cos(\pi i' \epsilon)                                                                         \\
     & \qquad \qquad \times \big(\sin(\pi i' \epsilon)\cos(\pi j' \epsilon)+\cos(\pi i' \epsilon)\sin(\pi j' \epsilon)\big)                                                                                               \\
     & \qquad \qquad \times \big(\cos(\pi i' \epsilon)\cos(\pi j' \epsilon)-\sin(\pi i' \epsilon)\sin(\pi j' \epsilon)\big)                                                                                               \\
     & = 4 \frac{\sin^2(\pi i' \epsilon)}{\epsilon^2}\cos^2(\pi i' \epsilon)\cos^2(\pi j' \epsilon)+4\frac{\sin(\pi i' \epsilon)\sin(\pi j' \epsilon)}{\epsilon^2}\cos^3(\pi i' \epsilon)\cos(\pi j' \epsilon)            \\
     & \quad - 4 \frac{\sin^3(\pi i' \epsilon)\sin(\pi j' \epsilon)}{\epsilon^2} \cos(\pi i' \epsilon)\cos(\pi j' \epsilon)- 4 \frac{\sin^2(\pi i' \epsilon)\sin^2(\pi j' \epsilon)}{\epsilon^2} \cos^2(\pi i' \epsilon).
  \end{align*}
  Therefore, we can show that
  \begin{align*}
     & \quad \abs{\frac{\sin(2\pi i' \epsilon)\sin(2\pi (i'+j') \epsilon)}{\epsilon^2}-4\pi^2 i'(i'+j')}                                                       \\
     & \lesssim \epsilon^2 \RoundBrackets*{(i')^4+(i')^2(j')^2+\abs{i'}^3\abs{j'}+\abs{i'}\abs{j'}^3}                                                          \\
     & \quad + \abs{\frac{\sin^3(\pi i' \epsilon)\sin(\pi j' \epsilon)}{\epsilon^2}} + \abs{\frac{\sin^2(\pi i' \epsilon)\sin^2(\pi j' \epsilon)}{\epsilon^2}} \\
     & \lesssim \epsilon^2 \RoundBrackets*{(i')^4+(i')^2(j')^2+\abs{i'}^3\abs{j'}+\abs{i'}\abs{j'}^3} \lesssim \epsilon^2 \RoundBrackets*{(i')^2+(j')^2}^2.
  \end{align*}
  Following the same procedure for other $(\imath, \jmath)$ and recalling that $c_{\SSSText{D}}\approx 1$, we can obtain that
  \[
    \norm{\RoundBrackets*{\bm{b}_{\SSSText{D}}\otimes \bm{b}_{\SSSText{D}}-\bm{b}_{\SSSText{C}}\otimes \bm{b}_{\SSSText{C}}}/c_{\SSSText{D}}} \lesssim \epsilon^2 \RoundBrackets*{(i')^2+(j')^2}^2.
  \]
  For the last term, we can directly calculate that $\norm{\bm{b}_{\SSSText{C}}\otimes \bm{b}_{\SSSText{C}}} \lesssim (i')^2+(j')^2$ and
  \begin{multline*}
    \abs{\frac{1}{c_{\SSSText{D}}}-\frac{1}{c_{\SSSText{C}}} } \lesssim \abs{c_{\SSSText{D}}-c_{\SSSText{C}}} \lesssim \abs{\sin^2(\pi i' \epsilon)}+ \abs{\sin^2(\pi (i'+j') \epsilon)}+\abs{\sin^2(\pi j' \epsilon)}\\
    \lesssim \epsilon^2\RoundBrackets*{(i')^2+(j')^2}.
  \end{multline*}
  Now we can arrive at
  \[
    \norm{B_{\SSSText{D}}[i',j']-B_{\SSSText{C}}[i',j']} \lesssim_{\rho^\star} \epsilon^2\RoundBrackets*{(i')^2+(j')^2}^2.
  \]
  Consequently, the proof is completed by combining the above estimate with \cref{eq: basic inequality}.
\end{proof}

We can simply obtain a corollary from \cref{thm: inverse difference}.
Suppose that $\hat{\tau}[i',j']=0$ for all $(i',j')\in \mathscr{F}_N^\circ$.
Then, we can show that
\begin{equation} \label{eq: mode difference}
  \abs{\hat{\bm{u}}_{\SSSText{D}}[i',j']- \hat{\bm{u}}_{\SSSText{C}}[i',j']}= \abs{\RoundBrackets*{B_{\SSSText{D}}^{-1}-B_{\SSSText{C}}^{-1}}\hat{\bm{f}}[i',j']} \lesssim_{\rho^\star} \epsilon^2 \abs{\hat{\bm{f}}[i',j']}.
\end{equation}
Therefore, we can rewrite \cref{eq: mode difference} as follows: if $\tau=0$, then
\[
  \norm{\bm{u}_{\SSSText{D}}-\bm{u}_{\SSSText{C}}}_{0} \lesssim_{\rho^\star} \epsilon^2 \norm{\bm{f}}_{0}.
\]
We then investigate the case that $\tau \neq 0$.

As we have explained, we cannot expect an estimate in \cref{eq: l2 convergence wrong}.
Hence, our strategy is to provide a higher regularity for $\tau$.
The following lemma guides us to establish a proper assumption on $\tau$.
\begin{lemma}\label{lem: diff B}
  For any $(i',j')\in \mathscr{F}_N$, it holds that
  \[
    \abs{\bm{b}_{\SSSText{D}}/c_{\SSSText{D}}-\bm{b}_{\SSSText{C}}/c_{\SSSText{C}}} \lesssim \epsilon^2 \RoundBrackets*{\abs{i'}^3+\abs{j'}^3}.
  \]
\end{lemma}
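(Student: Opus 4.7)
The plan is to decompose the difference additively as
\[
  \frac{\bm{b}_{\SSSText{D}}}{c_{\SSSText{D}}} - \frac{\bm{b}_{\SSSText{C}}}{c_{\SSSText{C}}} = \frac{\bm{b}_{\SSSText{D}} - \bm{b}_{\SSSText{C}}}{c_{\SSSText{D}}} + \bm{b}_{\SSSText{C}}\RoundBrackets*{\frac{1}{c_{\SSSText{D}}} - \frac{1}{c_{\SSSText{C}}}},
\]
and estimate the two summands separately. A useful preliminary observation is that $c_{\SSSText{D}} \geq 36 - 24 = 12$ and $c_{\SSSText{C}} = 36$, so both denominators are bounded away from zero uniformly in $(i',j')$ and $\epsilon$, and division causes no trouble.

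For the numerator of the first summand, I would apply \cref{lem: sin cos} mode by mode. Writing $\sin(2\pi i'\epsilon)/\epsilon = 2\sin(\pi i'\epsilon)\cos(\pi i'\epsilon)/\epsilon$ and invoking \cref{lem: sin cos} with $m=1,\ n=0,\ m^\circ=1,\ n^\circ=0$ gives
\[
  \abs{\frac{\sin(2\pi i'\epsilon)}{\epsilon} - 2\pi i'} \lesssim \epsilon^2 \abs{i'}^3,
\]
with the parity observation that sine is odd and cosine is even, so the bound holds for $i'$ of either sign. The same argument applied to $j'$ and to $i'+j'$, combined with $\abs{i'+j'}^3 \lesssim \abs{i'}^3 + \abs{j'}^3$, yields $\abs{\bm{b}_{1,\SSSText{D}}-\bm{b}_{1,\SSSText{C}}} \lesssim \epsilon^2 \abs{i'}^3$, $\abs{\bm{b}_{3,\SSSText{D}}-\bm{b}_{3,\SSSText{C}}} \lesssim \epsilon^2 \abs{j'}^3$, and $\abs{\bm{b}_{2,\SSSText{D}}-\bm{b}_{2,\SSSText{C}}} \lesssim \epsilon^2(\abs{i'}^3+\abs{j'}^3)$. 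Summing the three mode contributions and dividing by $c_{\SSSText{D}}\gtrsim 1$ controls the first summand by $\epsilon^2(\abs{i'}^3+\abs{j'}^3)$.

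For the second summand, the estimate $\abs{c_{\SSSText{D}}-c_{\SSSText{C}}} \lesssim \epsilon^2((i')^2+(j')^2)$ — which is a direct consequence of $\abs{\sin(\pi i'\epsilon)}^2 \lesssim \epsilon^2(i')^2$ and was already used at the end of the proof of \cref{thm: inverse difference} — combines with the lower bound on $c_{\SSSText{D}}c_{\SSSText{C}}$ to give $\abs{1/c_{\SSSText{D}} - 1/c_{\SSSText{C}}} \lesssim \epsilon^2((i')^2+(j')^2)$. Since $\abs{\bm{b}_{\SSSText{C}}} \lesssim \abs{i'} + \abs{j'}$ directly from its definition, Young's inequality (i.e., $\abs{i'}(j')^2 \leq \tfrac{1}{3}\abs{i'}^3 + \tfrac{2}{3}\abs{j'}^3$ and its symmetric partner) gives $(\abs{i'}+\abs{j'})((i')^2+(j')^2) \lesssim \abs{i'}^3 + \abs{j'}^3$, closing the estimate.

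I do not expect any serious obstacle in this lemma; it is essentially a tool-lemma and reduces to bookkeeping once the splitting above is written down. The only subtle point is ensuring that cross terms such as $\abs{i'}(j')^2$ are absorbed into the correct form $\abs{i'}^3+\abs{j'}^3$ rather than a strictly larger quantity like $((i')^2+(j')^2)^{3/2}$, which is handled by the Young-type inequality mentioned above.
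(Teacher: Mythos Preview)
Your proposal is correct and follows essentially the same route as the paper: the identical additive splitting $\bm{b}_{\SSSText{D}}/c_{\SSSText{D}}-\bm{b}_{\SSSText{C}}/c_{\SSSText{C}}=(\bm{b}_{\SSSText{D}}-\bm{b}_{\SSSText{C}})/c_{\SSSText{D}}+\bm{b}_{\SSSText{C}}(1/c_{\SSSText{D}}-1/c_{\SSSText{C}})$, with mode-by-mode control of $\bm{b}_{\SSSText{D}}-\bm{b}_{\SSSText{C}}$ via \cref{lem: sin cos} and the already-established bound on $\abs{1/c_{\SSSText{D}}-1/c_{\SSSText{C}}}$. Your write-up is in fact slightly more explicit than the paper's (the lower bound $c_{\SSSText{D}}\geq 12$, the parity remark, and the Young-type absorption of cross terms), but none of this departs from the paper's argument.
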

\begin{proof}
  The proof here follows the same approach as in \cref{thm: inverse difference} by using a decomposition:
  \[
    \abs{\bm{b}_{\SSSText{D}}/c_{\SSSText{D}}-\bm{b}_{\SSSText{C}}/c_{\SSSText{C}}} \leq \sum_{\imath \in \CurlyBrackets{1,2,3}} \abs{\bm{b}_{\SSSText{\imath, D}}-\bm{b}_{\SSSText{\imath, C}}}/c_{\SSSText{D}}+\abs{\bm{b}_{\SSSText{C}}} \abs{\frac{1}{c_{\SSSText{D}}}-\frac{1}{c_{\SSSText{C}}}}.
  \]
  As previously shown, we have $\sum_{\imath \in \CurlyBrackets{1,2,3}} \abs{\bm{b}_{\SSSText{\imath, D}}-\bm{b}_{\SSSText{\imath, C}}} \lesssim \epsilon^2 \RoundBrackets{\abs{i'}^3+\abs{j'}^3}$.
  Additionally, it has already been proved that $\abs{1/c_{\SSSText{D}}-1/c_{\SSSText{C}}} \lesssim \epsilon^2 \RoundBrackets{\abs{i'}^2+\abs{j'}^2}$.
  Therefore, the proof is completed.
\end{proof}
Now the materials for proving the main statement are prepared, and we observe that
\[
  \hat{\bm{u}}_{\SSSText{D}}-\hat{\bm{u}}_{\SSSText{C}} = \underbrace{\RoundBrackets*{B_{\SSSText{D}}^{-1}-B_{\SSSText{C}}^{-1}}\hat{\bm{f}}}_{\coloneqq J_1} + \underbrace{\RoundBrackets*{B_{\SSSText{C}}^{-1}-B_{\SSSText{D}}^{-1}} \bm{b}_{\SSSText{D}}/c_{\SSSText{D}} \hat{\tau}}_{\coloneqq J_2} + \underbrace{B_{\SSSText{C}}^{-1}\RoundBrackets*{\bm{b}_{\SSSText{C}}/c_{\SSSText{C}}-\bm{b}_\SSSText{D}/c_{\SSSText{D}}}\hat{\tau}}_{\coloneqq J_3}.
\]
It has been shown in \cref{eq: mode difference} that $\abs{J_1} \lesssim_{\rho^\star} \epsilon^2 \norm{\hat{\bm{f}}}_{0}$.
For $J_2$, it is easy to check that $\abs{\bm{b}_{\SSSText{D}}/c_{\SSSText{D}}} \lesssim \RoundBrackets*{\abs{i'}+\abs{j'}}$, and we can hence derive that $\abs{J_2} \lesssim_{\rho^\star}\epsilon^2\RoundBrackets*{\abs{i'}+\abs{j'}}\abs{\hat{\tau}}$.
The last term, $J_3$, can be estimated by \cref{prop: coercive C,lem: diff B}, giving $\abs{J_3} \lesssim_{\rho^\star} \epsilon^2 \RoundBrackets*{\abs{i'}+\abs{j'}}\abs{\hat{\tau}}$.
Therefore, we can conclude that
\[
  \abs{\hat{\bm{u}}_{\SSSText{D}}[i',j']-\hat{\bm{u}}_{\SSSText{C}}[i',j']} \lesssim_{\rho^\star} \epsilon^2 \CurlyBrackets*{\abs{\hat{\bm{f}}[i',j']}+\RoundBrackets*{\abs{i'}+\abs{j'}}\abs{\hat{\tau}[i',j']}}.
\]
To keep the $\bigO(\epsilon^2)$ order, we can impose a higher regularity assumption on $\tau$ as $\abs{\tau}_1$ remaining bounded w.r.t.\ $\epsilon$.

The difference of $\theta_{\SSSText{D}}$ and $\theta_{\SSSText{C}}$ can be expressed as
\[
  \hat{\theta}_{\SSSText{D}} - \hat{\theta}_{\SSSText{C}} = \underbrace{\RoundBrackets*{\bm{b}_{\SSSText{D}}/c_{\SSSText{D}}-\bm{b}_{\SSSText{C}}/c_{\SSSText{C}}}\cdot \hat{\bm{u}}_{\SSSText{D}}}_{\coloneqq K_1} + \underbrace{\bm{b}_{\SSSText{C}}\cdot \RoundBrackets*{\hat{\bm{u}}_{\SSSText{D}}-\hat{\bm{u}}_{\SSSText{C}}}/c_{\SSSText{C}}}_{\coloneqq K_2}+\underbrace{\hat{\tau}\RoundBrackets*{1/c_{\SSSText{D}}-1/c_{\SSSText{C}}}}_{\coloneqq K_3}.
\]
By \cref{prop: coercive D}, we can derive that
\[
  \abs{\hat{\bm{u}}_{\SSSText{D}}} \leq \norm{B_{\SSSText{D}}^{-1}} \RoundBrackets*{\abs{\hat{\bm{f}}}+\abs{\bm{b}_{\SSSText{D}}/c_{\SSSText{D}}}\abs{\hat{\tau}}}\lesssim_{\rho^\star} \RoundBrackets*{\abs{i'}^2+\abs{j'}^2}^{-1} \CurlyBrackets*{\abs{\hat{\bm{f}}}+\RoundBrackets*{\abs{i'}+\abs{j'}}\abs{\hat{\tau}}}.
\]
Combining \cref{lem: diff B}, we immediately obtain that
\[
  \abs{K_1} \lesssim_{\rho^\star} \epsilon^2 \CurlyBrackets*{\RoundBrackets*{\abs{i'}+\abs{j'}}\abs{\hat{\bm{f}}}+\RoundBrackets*{\abs{i'}^2+\abs{j'}^2}\abs{\hat{\tau}}}.
\]
Recalling $\abs{\bm{b}_{\SSSText{C}}}\lesssim \abs{i'}+\abs{j'}$ and previous results on $\abs{\hat{\bm{u}}_{\SSSText{D}}-\hat{\bm{u}}_{\SSSText{C}}}$, we can similarly show that
\[
  \abs{K_2} \lesssim_{\rho^\star} \epsilon^2 \CurlyBrackets*{\RoundBrackets*{\abs{i'}+\abs{j'}}\abs{\hat{\bm{f}}}+\RoundBrackets*{\abs{i'}^2+\abs{j'}^2}\abs{\hat{\tau}}}.
\]
We have proved $\abs{1/c_{\SSSText{D}}-1/c_{\SSSText{C}}} \lesssim \epsilon^2 \RoundBrackets{\abs{i'}^2+\abs{j'}^2}$, which yields $\abs{K_3} \lesssim \epsilon^2 \RoundBrackets{\abs{i'}^2+\abs{j'}^2} \abs{\hat{\tau}}$.
Thus, we can establish that
\[
  \abs{\hat{\theta}_{\SSSText{D}}[i',j'] - \hat{\theta}_{\SSSText{C}}[i',j']} \lesssim_{\rho^\star} \epsilon^2 \CurlyBrackets*{\RoundBrackets*{\abs{i'}+\abs{j'}}\abs{\hat{\bm{f}}[i',j']}+\RoundBrackets*{\abs{i'}^2+\abs{j'}^2}\abs{\hat{\tau}[i',j']}}.
\]
Hence, to maintain the $\bigO(\epsilon^2)$ order on the rotation variable, $\abs{\bm{f}}_1$ and $\abs{\tau}_2$ need to be bounded w.r.t.\ $\epsilon$.

Recall that as discrete functions, $\bm{f}$ and $\tau$ always have finite $\norm{\cdot}_0$ and $\abs{\cdot}_k$ norms.
This contrasts with the continuous case, as we have mentioned, where the function with finite $\norm{\cdot}_0$ and $\abs{\cdot}_k$ norms belongs to $H^k(D)$, a Sobolev space that is a subspace of $L^2(D)$.
Therefore, our results are different from conventional numerical analysis of solving PDEs (e.g., Ref.\ \cite{Brenner2008}), where a regularity assumption is usually imposed.
In our case, the ground truth is the discrete model, and our goal is, in some sense, to approximate the discrete model with the continuous model.
We can now summarize the main results of this section as the following theorem.
\begin{theorem} \label{thm: main}
  Suppose that
  \[
    \begin{bmatrix}
      \hat{\bm{f}}[0,0] \\ \hat{\tau}[0,0]
    \end{bmatrix} \in \im \RoundBrackets*{S_{\SSSText{D}}[0,0]}.
  \]
  Then, there exist solutions $(\bm{u}_{\SSSText{D}}[i',j'],\theta_{\SSSText{D}}[i',j'])$ to \cref{eq: D balance} and $(\bm{u}_{\SSSText{C}}[i',j'],\theta_{\SSSText{C}}[i',j'])$ to \cref{eq: C balance} for all $(i',j')\in \mathscr{F}_N$ and uniquely for $(i',j')\in \mathscr{F}_N^\circ$.
  Additionally, assume $\hat{\bm{u}}_{\SSSText{D}}[0,0]=\hat{\bm{u}}_{\SSSText{C}}[0,0]$ and $\hat{\theta}_{\SSSText{D}}[0,0]=\hat{\theta}_{\SSSText{C}}[0,0]$.
  Then, the following estimates hold:
  \begin{align*}
    \norm{\bm{u}_{\SSSText{D}}-\bm{u}_{\SSSText{C}}}_{0} & \lesssim_{\rho^\star} \epsilon^2 \RoundBrackets*{\norm{\bm{f}}_{0}+\abs{\tau}_{1}}, \\
    \norm{\theta_{\SSSText{D}}-\theta_{\SSSText{C}}}_{0} & \lesssim_{\rho^\star} \epsilon^2 \RoundBrackets*{\abs{\bm{f}}_{1}+\abs{\tau}_{2}}.
  \end{align*}
\end{theorem}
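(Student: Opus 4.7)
The plan is to assemble the pointwise Fourier estimates derived in the paragraphs immediately preceding the theorem statement and convert them into the claimed $L^2$-type norm bounds via Parseval's identity. The hard analytic work has already been done --- the uniform coercivity bounds in \cref{prop: coercive D,prop: coercive C}, the inverse-difference estimate in \cref{thm: inverse difference}, and the coupling-term estimate in \cref{lem: diff B} --- so the theorem itself is essentially a packaging step.

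First I would dispatch existence and uniqueness. For every $(i',j')\in \mathscr{F}_N^\circ$, $B_{\SSSText{D}}[i',j']$ and $B_{\SSSText{C}}[i',j']$ are strictly positive definite by \cref{prop: coercive D,prop: coercive C}, so the Schur-complement systems \cref{eq: D u}--\cref{eq: D theta} and \cref{eq: C u}--\cref{eq: C theta} uniquely determine $(\hat{\bm{u}}_{\SSSText{D}},\hat{\theta}_{\SSSText{D}})$ and $(\hat{\bm{u}}_{\SSSText{C}},\hat{\theta}_{\SSSText{C}})$ at every nonzero frequency. At $(0,0)$ the image hypothesis yields a solution that is unique only modulo $\ker S_{\SSSText{D}}[0,0]$, and stipulating $\hat{\bm{u}}_{\SSSText{D}}[0,0]=\hat{\bm{u}}_{\SSSText{C}}[0,0]$ and $\hat{\theta}_{\SSSText{D}}[0,0]=\hat{\theta}_{\SSSText{C}}[0,0]$ both removes the remaining indeterminacy and guarantees that the zero mode contributes nothing to the difference.

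Next I would recall the pointwise Fourier bounds already worked out above the statement: for all $(i',j')\in \mathscr{F}_N^\circ$,
\[
  |\hat{\bm{u}}_{\SSSText{D}}[i',j']-\hat{\bm{u}}_{\SSSText{C}}[i',j']| \lesssim_{\rho^\star} \epsilon^2\CurlyBrackets*{|\hat{\bm{f}}[i',j']|+(|i'|+|j'|)|\hat{\tau}[i',j']|},
\]
\[
  |\hat{\theta}_{\SSSText{D}}[i',j']-\hat{\theta}_{\SSSText{C}}[i',j']| \lesssim_{\rho^\star} \epsilon^2\CurlyBrackets*{(|i'|+|j'|)|\hat{\bm{f}}[i',j']|+((i')^2+(j')^2)|\hat{\tau}[i',j']|}.
\]
These follow from the decomposition $\hat{\bm{u}}_{\SSSText{D}}-\hat{\bm{u}}_{\SSSText{C}}=J_1+J_2+J_3$ and its $K_1,K_2,K_3$ analogue for $\hat{\theta}$, combined with \cref{thm: inverse difference}, \cref{prop: coercive C}, and \cref{lem: diff B}.

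Finally I would square these bounds, sum over $(i',j')\in \mathscr{F}_N$ (the $(0,0)$ term being null by construction), and apply Parseval together with the elementary inequalities $(a+b)^2\leq 2(a^2+b^2)$ and $((i')^2+(j')^2)^2\leq 2((i')^4+(j')^4)$. The displacement sum then collapses to $\norm{\bm{f}}_0^2+|\tau|_1^2$ and the rotation sum to $|\bm{f}|_1^2+|\tau|_2^2$, matching the right-hand sides in the statement exactly. The main ``obstacle'' at this stage is only bookkeeping: one must carefully pair the frequency weights $(|i'|+|j'|)^k$ with the correct $H^s$ semi-norm, and observe that the asymmetric regularity requirement --- one extra derivative on both $\bm{f}$ and $\tau$ for the rotation estimate --- is forced by the coupling term $\bm{b}_{\SSSText{D}}\cdot \hat{\bm{u}}_{\SSSText{D}}/c_{\SSSText{D}}$ in \cref{eq: D theta}, which transfers an extra weight of $(|i'|+|j'|)$ from $\hat{\bm{u}}$ onto $\hat{\theta}$.
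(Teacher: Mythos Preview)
Your proposal is correct and mirrors the paper's own argument exactly: the paper develops the pointwise Fourier bounds via the $J_1,J_2,J_3$ and $K_1,K_2,K_3$ decompositions in the text preceding the theorem, invokes \cref{prop: coercive D,prop: coercive C}, \cref{thm: inverse difference}, and \cref{lem: diff B}, and then simply states \cref{thm: main} as a summary without a separate proof block. The only step you make more explicit than the paper is the Parseval passage from mode-wise bounds to the $L^2$ and $H^s$ norms, which the paper leaves implicit.
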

Consider the following fact: for a discrete function $g$, it holds that
\[
  \abs{g}_1 \leq \max_{(i', j')\in \mathscr{F}_N} \sqrt{\abs{i'}^2+\abs{j'}^2} \norm{g}_0 \lesssim \epsilon^{-1} \norm{g}_0.
\]
We can immediately obtain a corollary from the above theorem.
\begin{corollary}
  Under the assumptions of \cref{thm: main}, it holds that
  \[
    \norm{\bm{u}_{\SSSText{D}}-\bm{u}_{\SSSText{C}}}_{0}+\abs{\bm{u}_{\SSSText{D}}-\bm{u}_{\SSSText{C}}}_{1} + \norm{\theta_{\SSSText{D}}-\theta_{\SSSText{C}}}_{0} \lesssim_{\rho^\star} \epsilon \RoundBrackets*{\norm{\bm{f}}_{0}+\abs{\tau}_{1}}.
  \]
\end{corollary}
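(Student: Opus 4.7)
The plan is to combine the two bounds of \cref{thm: main} with the discrete inverse inequality stated immediately before the corollary, namely $\abs{g}_1 \lesssim \epsilon^{-1} \norm{g}_0$ for any discrete function $g$ on $\mathscr{I}_N$. This inverse inequality is available because the Fourier support of any such $g$ is confined to $\mathscr{F}_N$, on which $\sqrt{\abs{i'}^2+\abs{j'}^2} \lesssim \epsilon^{-1}$. The same reasoning applies to $\bm{u}_{\SSSText{C}}$ and $\theta_{\SSSText{C}}$ because the low-frequency principle ensures that the right-hand terms $\hat{\bm{f}}$ and $\hat{\tau}$ (and hence the Fourier coefficients of the solutions) vanish outside $\mathscr{F}_N$ once everything is interpreted in the trigonometric polynomial space $\mathscr{T}_N$.

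First, the term $\norm{\bm{u}_{\SSSText{D}}-\bm{u}_{\SSSText{C}}}_0$ is bounded directly by \cref{thm: main} as $\lesssim_{\rho^\star} \epsilon^2(\norm{\bm{f}}_0 + \abs{\tau}_1)$, which is already stronger than what the corollary requires. Next, I would apply the inverse inequality to the difference $\bm{u}_{\SSSText{D}}-\bm{u}_{\SSSText{C}}$ itself, obtaining $\abs{\bm{u}_{\SSSText{D}}-\bm{u}_{\SSSText{C}}}_1 \lesssim \epsilon^{-1} \norm{\bm{u}_{\SSSText{D}}-\bm{u}_{\SSSText{C}}}_0$, and then combine with the first bound to get $\abs{\bm{u}_{\SSSText{D}}-\bm{u}_{\SSSText{C}}}_1 \lesssim_{\rho^\star} \epsilon(\norm{\bm{f}}_0 + \abs{\tau}_1)$. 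Finally, for the rotation term, \cref{thm: main} gives $\norm{\theta_{\SSSText{D}}-\theta_{\SSSText{C}}}_0 \lesssim_{\rho^\star} \epsilon^2(\abs{\bm{f}}_1 + \abs{\tau}_2)$; applying the inverse inequality now to the data, namely $\abs{\bm{f}}_1 \lesssim \epsilon^{-1} \norm{\bm{f}}_0$ and $\abs{\tau}_2 \lesssim \epsilon^{-1} \abs{\tau}_1$, converts this into $\lesssim_{\rho^\star} \epsilon(\norm{\bm{f}}_0 + \abs{\tau}_1)$. Summing the three contributions yields the stated estimate.

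There is no substantial obstacle here: each of the three bounds reduces to \cref{thm: main} via one invocation of the discrete inverse inequality, applied either to the solution (for the $H^1$ semi-norm of the displacement error) or to the data (for the $L^2$ norm of the rotation error). The conceptual content, rather than a difficulty, is the trade-off the corollary encodes: by paying a factor of $\epsilon^{-1}$ one lowers the regularity required of $\bm{f}$ and $\tau$ by one order, converting the optimal $\epsilon^2$-rate of \cref{thm: main} into the uniform $\epsilon$-rate stated in the corollary, which is the form most convenient for comparison with classical periodic homogenization.
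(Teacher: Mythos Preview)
Your proposal is correct and follows exactly the paper's approach: the paper simply records the discrete inverse inequality $\abs{g}_1 \lesssim \epsilon^{-1}\norm{g}_0$ and states that the corollary is immediate from \cref{thm: main}, and you have spelled out the three applications of that inequality in the intended way. One small terminological slip: the reason the inverse inequality applies to $\bm{u}_{\SSSText{C}}$, $\theta_{\SSSText{C}}$ is not the low-frequency principle \eqref{eq: low freq} (which is a much stronger, $\epsilon$-independent support condition and is \emph{not} assumed in \cref{thm: main}), but simply that $\bm{f}_{\SSSText{C}},\tau_{\SSSText{C}}\in\mathscr{T}_N$ by construction as trigonometric interpolants of the discrete data, so all solution Fourier coefficients already live in $\mathscr{F}_N$.
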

Meanwhile, as mode differences, $\RoundBrackets{\hat{\bm{u}}_{\SSSText{D}}[i',j']-\hat{\bm{u}}_{\SSSText{C}}[i',j']}$ and $\RoundBrackets{\hat{\theta}_{\SSSText{D}}[i',j']-\hat{\theta}_{\SSSText{C}}[i',j']}$, have been explicitly estimated, we can also derive a corollary regarding the $H^k$ norm.
\begin{corollary}
  Under the assumptions of \cref{thm: main}, it holds that
  \begin{align*}
    \abs{\bm{u}_{\SSSText{D}}-\bm{u}_{\SSSText{C}}}_{k} & \lesssim_{\rho^\star} \epsilon^2 \RoundBrackets*{\abs{\bm{f}}_{k}+\abs{\tau}_{k+1}},   \\
    \abs{\theta_{\SSSText{D}}-\theta_{\SSSText{C}}}_{k} & \lesssim_{\rho^\star} \epsilon^2 \RoundBrackets*{\abs{\bm{f}}_{k+1}+\abs{\tau}_{k+2}},
  \end{align*}
  where $k > 0$.
\end{corollary}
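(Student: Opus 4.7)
The plan is to square the pointwise Fourier-mode estimates that were already derived in the discussion preceding \cref{thm: main} and insert them into the definition of the discrete $H^k$ seminorm. Concretely, for each $(i', j') \in \mathscr{F}_N^\circ$ we have at our disposal
\[
  \abs{\hat{\bm{u}}_{\SSSText{D}}[i',j']-\hat{\bm{u}}_{\SSSText{C}}[i',j']} \lesssim_{\rho^\star} \epsilon^2 \CurlyBrackets*{\abs{\hat{\bm{f}}[i',j']}+\RoundBrackets*{\abs{i'}+\abs{j'}}\abs{\hat{\tau}[i',j']}}
\]
and the analogous bound with an extra $(\abs{i'}+\abs{j'})$ factor on $\hat{\bm{f}}$ and an $(\abs{i'}^2+\abs{j'}^2)$ factor on $\hat{\tau}$ for the rotation mode. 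The $(0,0)$ mode drops out because of the normalization assumption $\hat{\bm{u}}_{\SSSText{D}}[0,0]=\hat{\bm{u}}_{\SSSText{C}}[0,0]$ and $\hat{\theta}_{\SSSText{D}}[0,0]=\hat{\theta}_{\SSSText{C}}[0,0]$ carried over from \cref{thm: main}.

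Next I would invoke two elementary facts: the inequality $(a+b)^2 \leq 2(a^2+b^2)$ to separate the $\bm{f}$- and $\tau$-contributions after squaring, and the two-sided equivalence $\RoundBrackets*{\abs{i'}+\abs{j'}}^{2s} \approx_s \abs{i'}^{2s}+\abs{j'}^{2s}$ valid for every $s \geq 0$. With these in hand, squaring the mode estimate for $\hat{\bm{u}}_{\SSSText{D}}-\hat{\bm{u}}_{\SSSText{C}}$, multiplying by the weight $\abs{i'}^{2k}+\abs{j'}^{2k}$, and summing over $\mathscr{F}_N$ yields
\[
  \abs{\bm{u}_{\SSSText{D}}-\bm{u}_{\SSSText{C}}}_k^2 \lesssim_{\rho^\star, k} \epsilon^4 \sum_{(i',j') \in \mathscr{F}_N}\CurlyBrackets*{\RoundBrackets*{\abs{i'}^{2k}+\abs{j'}^{2k}}\abs{\hat{\bm{f}}[i',j']}^2 + \RoundBrackets*{\abs{i'}^{2(k+1)}+\abs{j'}^{2(k+1)}}\abs{\hat{\tau}[i',j']}^2},
\]
which, on comparing to the seminorm definition, is precisely $\epsilon^4\RoundBrackets*{\abs{\bm{f}}_k^2+\abs{\tau}_{k+1}^2}$. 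Taking square roots (and absorbing the factor $\sqrt{2}$ into the $\lesssim$) delivers the first stated estimate.

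The rotation estimate follows by exactly the same template applied to the mode bound for $\hat{\theta}_{\SSSText{D}}-\hat{\theta}_{\SSSText{C}}$: the additional factor $\RoundBrackets*{\abs{i'}+\abs{j'}}$ on $\hat{\bm{f}}$ shifts its required regularity order from $k$ to $k+1$, and the factor $\RoundBrackets*{\abs{i'}^2+\abs{j'}^2}$ on $\hat{\tau}$ shifts it from $k+1$ to $k+2$. The argument is essentially a bookkeeping reduction and presents no substantive obstacle; the only step that deserves a line of justification is the weight-equivalence $\RoundBrackets*{\abs{i'}+\abs{j'}}^{2s} \approx_s \abs{i'}^{2s}+\abs{j'}^{2s}$, which guarantees that the extra Fourier factors produced by the mode estimates combine cleanly with the $H^k$ weight instead of introducing mixed terms that would spoil the identification with $\abs{\cdot}_{k+1}$ and $\abs{\cdot}_{k+2}$.
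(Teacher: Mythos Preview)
Your proposal is correct and matches the paper's own (implicit) argument: the paper does not spell out a proof but simply remarks that the mode differences $\hat{\bm{u}}_{\SSSText{D}}-\hat{\bm{u}}_{\SSSText{C}}$ and $\hat{\theta}_{\SSSText{D}}-\hat{\theta}_{\SSSText{C}}$ have already been estimated pointwise, so the $H^k$ statement follows directly. Your write-up is exactly the bookkeeping the paper leaves to the reader, and the two auxiliary facts you isolate (the squaring inequality and the weight equivalence $(\abs{i'}+\abs{j'})^{2s}\approx_s \abs{i'}^{2s}+\abs{j'}^{2s}$) are precisely what is needed.
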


% However, it is easy to see that for any $g\in H^k(D)$, we can find a sequence of functions $\CurlyBrackets{g_N}$ in $\mathscr{T}_N$ such that $\norm{g_N - g}_0+\abs{g_N-g}_k \rightarrow 0$.

The remainder of this section aims to clarify that the theoretical results in \cref{thm: main} are, in a sense, optimal. Referring again to \cref{eq: D u,eq: D theta,eq: C u,eq: C theta}, we can observe the following:
\[
  \hat{\bm{u}}_{\SSSText{D}} - \hat{\bm{u}}_{\SSSText{C}} = \RoundBrackets*{B_{\SSSText{D}}^{-1}-B_{\SSSText{C}}^{-1}}\hat{\bm{f}} - \RoundBrackets*{B_{\SSSText{D}}^{-1}\bm{b}_{\SSSText{D}}/c_{\SSSText{D}}-B_{\SSSText{C}}^{-1}\bm{b}_{\SSSText{D}}/c_{\SSSText{D}}}\hat{\tau},
\]
and
\begin{multline*}
  \hat{\theta}_{\SSSText{D}} - \hat{\theta}_{\SSSText{C}} = \RoundBrackets*{B_{\SSSText{D}}^{-1}\bm{b}_{\SSSText{D}}/c_{\SSSText{D}}-B_{\SSSText{C}}^{-1}\bm{b}_{\SSSText{C}}/c_{\SSSText{C}}} \cdot \hat{\bm{f}} \\
  + \RoundBrackets*{{1}/{c_{\SSSText{D}}}-{1}/{c_{\SSSText{C}}}- {\bm{b}_{\SSSText{D}}\cdot B_{\SSSText{D}}^{-1}\bm{b}_{\SSSText{D}}}/{c_{\SSSText{D}}^2}+{\bm{b}_{\SSSText{C}}\cdot B_{\SSSText{C}}^{-1}\bm{b}_{\SSSText{C}}}/{c_{\SSSText{C}}^2}}\hat{\tau}.
\end{multline*}
Therefore, we devise the following indexes:
\begin{gather*}
  \mathbf{Err}_0[i',j']\coloneqq \epsilon^{-2} \norm{B_{\SSSText{D}}^{-1}-B_{\SSSText{C}}^{-1}},\quad \mathbf{Err}_1[i',j']\coloneqq \epsilon^{-2} \frac{\abs{B_{\SSSText{D}}^{-1}\bm{b}_{\SSSText{D}}/c_{\SSSText{D}}-B_{\SSSText{C}}^{-1}\bm{b}_{\SSSText{C}}/c_{\SSSText{C}}}}{\abs{i'}+\abs{j'}}, \\
  \mathbf{Err}_2[i',j']\coloneqq \epsilon^{-2} \frac{\abs{{1}/{c_{\SSSText{D}}}-{1}/{c_{\SSSText{C}}}- {\bm{b}_{\SSSText{D}}\cdot B_{\SSSText{D}}^{-1}\bm{b}_{\SSSText{D}}}/{c_{\SSSText{D}}^2}+{\bm{b}_{\SSSText{C}}\cdot B_{\SSSText{C}}^{-1}\bm{b}_{\SSSText{C}}}/{c_{\SSSText{C}}^2}}}{\abs{i'}^2+\abs{j'}^2}.
\end{gather*}
If the maximal values of $\mathbf{Err}_0[i',j']$, $\mathbf{Err}_1[i',j']$, and $\mathbf{Err}_2[i',j']$ over $(i',j') \in \mathscr{F}_N^\circ$ are all bounded above by $\bigO(1)$ independently of $\epsilon$, then for any $\epsilon$, we can find right-hand terms $(\bm{f}', \tau')$ such that
\begin{align*}
  \norm{\bm{u}_{\SSSText{D}}-\bm{u}_{\SSSText{C}}}_{0} & \gtrsim_{\rho^\star} \epsilon^2 \RoundBrackets*{\norm{\bm{f}'}_{0}+\abs{\tau'}_{1}}, \\
  \norm{\theta_{\SSSText{D}}-\theta_{\SSSText{C}}}_{0} & \gtrsim_{\rho^\star} \epsilon^2 \RoundBrackets*{\abs{\bm{f}'}_{1}+\abs{\tau'}_{2}}.
\end{align*}
Equivalently, the above inequalities imply that the theoretical results in \cref{thm: main} are optimal. We rely on numerical experiments to verify this point.

We select $\epsilon$ from the set $\{\tfrac{1}{17}, \tfrac{1}{33}, \tfrac{1}{65}, \tfrac{1}{129}\}$ and compute the indices $\mathbf{Err}_0[i',j']$, $\mathbf{Err}_1[i',j']$, and $\mathbf{Err}_2[i',j']$ for all $(i',j')\in \mathscr{F}_N$.
To gain insights into the effect of $\rho^\star$, we also vary $\rho^\star$ in $\{0.01, 1.0, 100.0\}$.
Since $\mathbf{Err}_0[i',j']$, $\mathbf{Err}_1[i',j']$, and $\mathbf{Err}_2[i',j']$ are undefined at $(0,0)$, for smooth visualization, we use the average values from the surrounding indices: $(0,1)$, $(1,0)$, $(0,-1)$, and $(-1,0)$.
For instance, $\mathbf{Err}_0[0,0]$ is approximated as $(\mathbf{Err}_0[0,1] + \mathbf{Err}_0[1,0] + \mathbf{Err}_0[0,-1] + \mathbf{Err}_0[-1,0]) / 4$.
The results are shown in \cref{fig: error_u_f,fig: error_u_tau,fig: error_theta_tau}, respectively, for $\mathbf{Err}_0$, $\mathbf{Err}_1$, and $\mathbf{Err}_2$.
In those figures, each row corresponds to a different $\epsilon$, and each column corresponds to a different $\rho^\star$.
Additionally, we provide the minimal and maximal values in a format $(\mathup{min},\mathup{max})$ of the index for the specific setting above the sub-figure.
From the figures, we can observe that by scaling $(i',j')$ to $(i'/N, j'/N)\in D$, all indexes exhibit a convergence behavior as $\epsilon$ decreases.
Specifically, we can see that the maximal and minimal values of $\mathbf{Err}_0[i',j']$, $\mathbf{Err}_1[i',j']$, and $\mathbf{Err}_2[i',j']$ over $(i',j') \in \mathscr{F}_N^\circ$ approach fixed values.
Therefore, we can confirm that the theoretical results in \cref{thm: main} are optimal.
As expected, for different $\rho^\star$, the limiting pattern of $\mathbf{Err}_0$, $\mathbf{Err}_1$, and $\mathbf{Err}_2$ are different.
It seems that $\mathbf{Err}_0$ and $\mathbf{Err}_1$ are more vulnerable to $\rho^\star$ (decreasing by a factor of $10$ as $\rho^\star$ increases to $10^2$), while $\mathbf{Err}_2$ shows less sensitivity to $\rho^\star$.

\begin{figure}[!ht]
  \centering
  \resizebox{\textwidth}{!}{\input{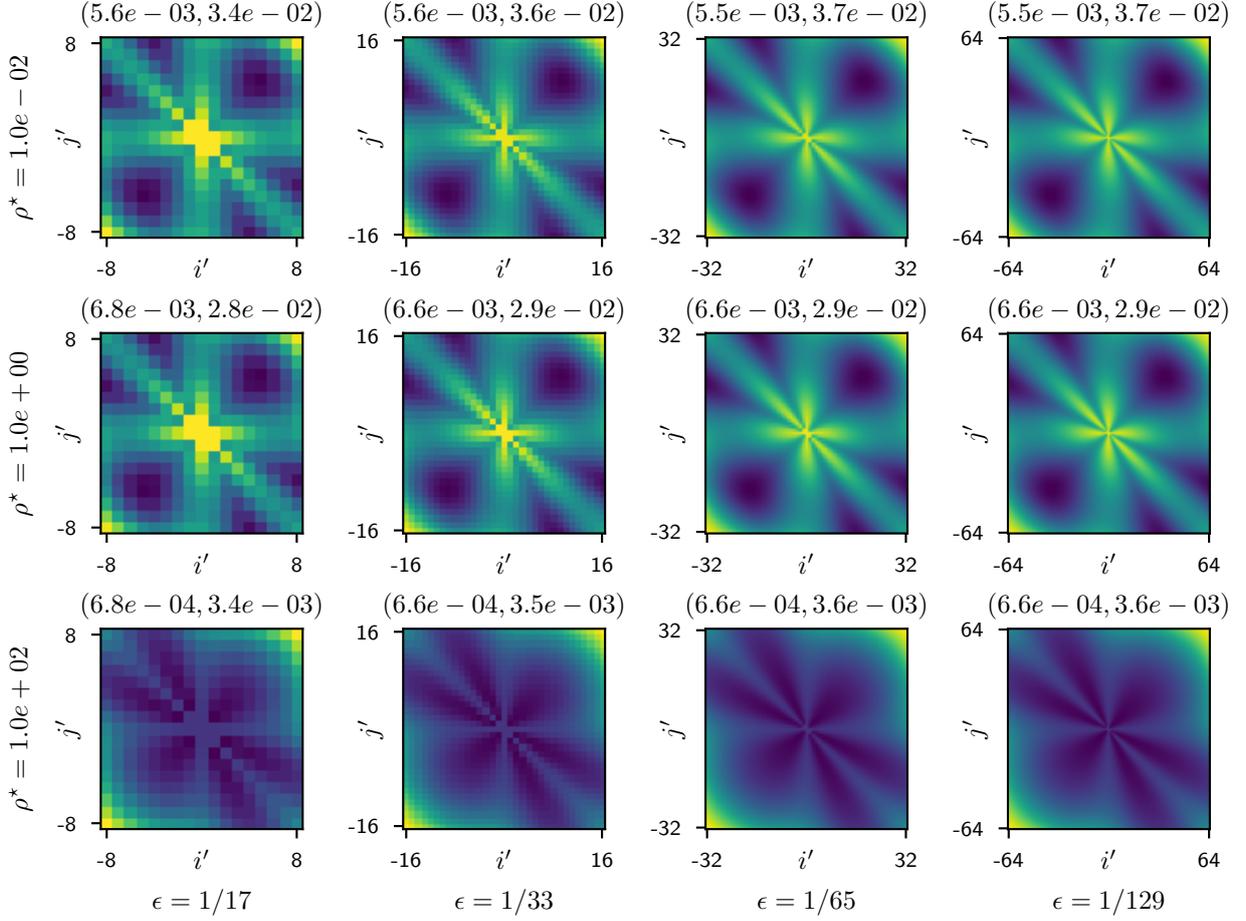}}
  \caption{The plot of $\mathbf{Err}_0[i',j']$ with $(i',j')\in \mathscr{F}_N$ under different settings is displayed.
  Each row corresponds to a different $\epsilon$, and each column corresponds to a different $\rho^\star$.
  The minimal and maximal values of each setting are indicated above the sub-figure.}\label{fig: error_u_f}
\end{figure}

\begin{figure}[!ht]
  \centering
  \resizebox{\textwidth}{!}{\input{figs/error_u_tau.pgf}}
  \caption{The plot of $\mathbf{Err}_1[i',j']$ with $(i',j')\in \mathscr{F}_N$ under different settings is displayed.
  Each row corresponds to a different $\epsilon$, and each column corresponds to a different $\rho^\star$.
  The minimal and maximal values of each setting are indicated above the sub-figure.}\label{fig: error_u_tau}
\end{figure}

\begin{figure}[!ht]
  \centering
  \resizebox{\textwidth}{!}{\input{figs/error_theta_tau.pgf}}
  \caption{The plot of $\mathbf{Err}_2[i',j']$ with $(i',j')\in \mathscr{F}_N$ under different settings is displayed.
  Each row corresponds to a different $\epsilon$, and each column corresponds to a different $\rho^\star$.
  The minimal and maximal values of each setting are indicated above the sub-figure.}\label{fig: error_theta_tau}
\end{figure}

\section{Extension on the rectangular lattice}\label{sec: extensions}
% In this section, we explore the extensions of our results to two different types of lattices: the rectangular lattice and the honeycomb lattice.

The homogenization of the rectangular lattice parallels that of the triangular lattice in \cref{fig: triangular lattice}.
In this setting, two beam types arise, with directions
\(\bm{l}_1=[1,0]^\intercal\) and \(\bm{l}_2=[0,1]^\intercal\), and translation vectors
\(\bm{t}_x=[1,0]^\intercal\) and \(\bm{t}_y=[0,1]^\intercal\).
The analogue of \cref{thm: main} for the rectangular lattice can be stated accordingly; detailed derivations are omitted for brevity.
This section concentrates on the mechanical interpretation of the resulting homogenized models.

Because the lattice axes now coincide with the Cartesian axes, the operators $\partial_\alpha$ and $\partial_\beta$ reduce directly to $\partial_x$ and $\partial_y$, respectively.
Following the derivation in \cref{sec: homogenization}, the homogenized (continuous) model reads
\begin{subequations}
  \begin{align}
    -\nabla \cdot
    \begin{bmatrix}
      \rho^\star\,\partial_x u^x                    & 12\,\partial_x u^y - 12\,\theta_{\SSSText{C}} \\
      12\,\partial_y u^x + 12\,\theta_{\SSSText{C}} & \rho^\star\,\partial_y u^y
    \end{bmatrix}
     & = \bm{f}_{\SSSText{C}}, \label{eq: linear-mum-balance} \\
    -\bigl(12\,\partial_x u^y - 12\,\partial_y u^x - 24\,\theta_{\SSSText{C}}\bigr)
     & = \tau_{\SSSText{C}}, \label{eq: ang-mum-balance}
  \end{align}
\end{subequations}
where $\bm{u}_{\SSSText{C}}=\SquareBrackets{u^x,u^y}^\intercal$ and $\theta_{\SSSText{C}}$ denote the continuum displacement and microrotation.
Defining the (generally non-symmetric) stress tensor
\[
  \sigma =
  \begin{bmatrix}
    \rho^\star\,\partial_x u^x      & 12\,\partial_x u^y - 12\,\theta \\
    12\,\partial_y u^x + 12\,\theta & \rho^\star\,\partial_y u^y
  \end{bmatrix},
\]
\cref{eq: linear-mum-balance} enforces balance of linear momentum, while \cref{eq: ang-mum-balance} enforces balance of angular momentum.
The inclusion of the rotation variable breaks the symmetry of $\sigma$, in contrast to classical Cauchy elasticity.

\section{Conclusion}\label{sec: conclusion}
% From qualitative to quantitative analysis, mathematics is usually heavily involved.
The paper addresses the homogenization problem of beam lattices, and
% unlike previous work, it emphasizes a rigorous and precise mathematical description.
using the triangular lattice as a case study, we devise a mechanical problem with PBCs that its convergence to the continuum model is well-perceived.
We demonstrate that through the low-frequency assumption on external forces and torques, the PDE model can be convincingly determined.
However, to quantify the homogenization error, we observe numerically that the correct theory is not as straightforward as the periodic homogenization of elliptic PDEs.
Essentially, the rotation field renders the lattice system less coercive and more challenging.
To overcome this difficulty, we introduce a new identity that decouples the Schur complement into non-negative quadratic forms.
This allows us to establish the coercivity of the Schur complement, which is crucial for quantitative homogenization.
We confirm the optimality of all rates predicted by our theory through numerical experiments.
We extend this strategy to rectangular and parallelogram lattices, achieving similar results.
% For the honeycomb lattice, however, an additional assumption is required to simplify the analysis.

We believe several aspects of our work can be further developed to address broader challenges and refine existing methodologies.
First, our current results are limited to PBCs, which allow for the effective application of Fourier analysis.
However, other boundary conditions, such as Dirichlet boundary conditions, are equally important in practical scenarios.
We anticipate that our techniques can be extended to accommodate more general boundary conditions using generalized trigonometric transformations, such as sine transformations.
Second, as demonstrated in this study, when the homogenized model is independent of the scale parameter, further improvement in the homogenization error becomes unattainable.
While higher-order models can be derived using simple Taylor expansions, this approach often compromises coercivity, limiting the rigor of quantitative analysis like that presented in this work.
To overcome this limitation, we propose exploring the derivation of higher-order models through Fourier analysis while simultaneously achieving rigorous quantitative results.

\section*{Acknowledgments}
EC's research is partially supported by the Hong Kong RGC General Research Fund (Project numbers: 14305222, 14304021, and 14305624).
CY's research is partially supported by the Xidian University Specially Funded Project for Interdisciplinary Exploration (No. TZJH2024008).

\bibliographystyle{siamplain}
% Use this way to switch the bib file.
\bibliography{refs}
\end{document}